\renewcommand{\epsilon}{\varepsilon}
\renewcommand{\rho}{\varrho}
\renewcommand{\phi}{\varphi}
\renewcommand{\theta}{\vartheta}
\newtheorem{Def}{Definition}[section]
\newenvironment{definition}{\begin{Def} \rm}{\end{Def}}
\newtheorem{lemma}[Def]{Lemma}
\newtheorem{theorem}[Def]{Theorem}
\newtheorem{remark}[Def]{Remark}
\newcommand{\EnumeratedList}{\begin{enumerate}}
\newcommand{\EndofEnumeratedList}{\end{enumerate}}
\newcommand{\Number}[1]{\item[{\rm (#1)}]\parindent0pt\parskip1ex}
\newcommand{\PointedList}{\begin{itemize}}
\newcommand{\EndofPointedList}{\end{itemize}}
\newcommand{\BeginAxioms}{\begin{itemize}[itemindent=1em]}
\newcommand{\EndAxioms}{\end{itemize}}
\newcommand{\Axiom}[1]{\item[{\rm (#1)}]\parindent0pt\parskip1ex}
\newcommand{\LAE}{$\mathsf{LAE}$}
\newcommand{\LAEC}{$\mathsf{LAEC}$}
\newcommand{\LAEPC}{$\mathsf{LAEPC}$}
\newcommand{\CPL}{$\mathsf{CPL}$}
\newcommand{\aentails}[1]{\hspace{0.05ex}>_{\!#1}\hspace{0.05ex}}
\newcommand{\modelsLAE}{\models_{\mathsf{LAE}}}
\newcommand{\provesLAE}{\vdash_{\mathsf{LAE}}}
\newcommand{\modelsLAEC}{\models_{\mathsf{LAEC}}}
\newcommand{\provesLAEC}{\vdash_{\mathsf{LAEC}}}
\newcommand{\modelsLAEPC}{\models_{\mathsf{LAEPC}}}
\newcommand{\provesLAEPC}{\vdash_{\mathsf{LAEPC}}}
\newcommand{\stronger}{\preccurlyeq}
\newcommand{\lessstrong}{\succcurlyeq}
\newcommand{\equallystrong}{\approx}
\newcommand{\equallystrongcl}[1]{\langle #1 \rangle_\approx}
\newcommand{\true}{\top}
\newcommand{\false}{\bot}
\newcommand{\tand}{\odot}
\renewcommand{\leq}{\leqslant}
\renewcommand{\geq}{\geqslant}
\newcommand{\worldleq}{\sqsubseteq}
\newcommand{\smaller}{\Diamond^{\raisebox{0.2em}{\hspace{-1.5pt}$\scriptscriptstyle \leq$}}}
\newcommand{\larger}{\Diamond^{\raisebox{0.2em}{\hspace{-1pt}$\scriptscriptstyle \geq$}}}
\newcommand{\proves}{\vdash}
\newcommand{\doesnotprove}{\nvdash}
\newcommand{\impl}{\rightarrow}
\newcommand{\eq}{\leftrightarrow}
\newcommand{\below}{\leqslant}
\renewcommand{\above}{\geqslant}
\begin{document}

\title{Logics for approximate entailment \\ in ordered universes of discourse
\thanks{Preprint of an article published by Elsevier in the {\sl International Journal of Approximate Reasoning} {\bf 71} (2016), 50-63. It is available online at: {\tt https://www.sciencedirect.com/science/ article/pii/S0888613X16300020}.}}

\author{Thomas Vetterlein${}^1$, Francesc Esteva${}^2$, Llu\'{\i}s Godo${}^2$}

\affil{\footnotesize{
${}^1$ Johannes Kepler University Linz, Altenberger Stra\ss{}e 69, 4040 Linz, Austria; \\ {\tt Thomas.Vetterlein@jku.at} \\
${}^2$ IIIA - CSIC, Campus of the UAB s/n, 08193 Bellaterra, Spain; \\ {\tt \{esteva,godo\}@iiia.csic.es}}
}

\date{}

\maketitle

\begin{abstract} 

The Logic of Approximate Entailment (\LAE) is a graded counterpart of classical propositional calculus, where conclusions that are only approximately correct can be drawn. This is achieved by equipping the underlying set of possible worlds with a similarity relation. When using this logic in applications, however, a disadvantage must be accepted; namely, in \LAE{} it is not possible to combine conclusions in a conjunctive way. In order to overcome this drawback, we propose in this paper a modification of \LAE\ where, at the semantic level, the underlying set of worlds is moreover endowed with an order structure. The chosen framework is designed in view of possible applications. 

\end{abstract}

\section{Introduction}
\label{sec:introduction}

In his seminal work on similarity-based reasoning \cite{Rus}, E.\ Ruspini proposes the interpretation of fuzzy sets in terms of (crisp) sets and fuzzy similarity relations. To this end, he builds up a framework for approximate inference that is based on the mutual similarity of the propositions involved. Following these lines, a number of approaches have dealt with similarity-based reasoning from a logical perspective \cite{DPEGG,EGGR,EGRV,GoRo}; see also \cite[Section 5.2]{LiLi}.  In particular, in the PhD thesis of R.\ Rodr\' iguez \cite{Rod}, the so-called Logic of Approximate Entailment (\LAE) is studied.

\LAE{} is a propositional logic and propositions are interpreted, as in classical logic, by subsets of a fixed set, called the set of worlds. Propositions can be logically combined like in classical propositional logic and the Boolean connectives are interpreted by the corresponding set-theoretic operations as usual. However, it is in addition assumed that the set of worlds is endowed with a fuzzy similarity relation, which associates with each pair of two worlds their degree of resemblance. The basic semantic structures are hence {\em fuzzy similarity spaces}, which consist of a set of worlds and a fuzzy similarity relation, and the core syntactic objects of \LAE\ are implications between propositions endowed with a degree. The intended meaning of a statement of the form $\alpha \aentails{c} \beta$ is that $\beta$ is an approximate consequence of $\alpha$ to the degree $c$, where $c$ is a real number between $0$ and $1$. If $c = 1$, the implication is defined to hold under the same condition as in classical propositional logic: at any world at which $\alpha$ holds, also $\beta$ must hold. If $c < 1$, however, the statement is weaker, namely, we do not require in this case that if $\alpha$ holds at a world $w$, also $\beta$ holds at $w$, we only require that there is a further world $w'$ at which $\beta$ holds and whose similarity with $w$ is at least $c$. See Figure \ref{fig:Principle} for an illustration.

\begin{figure}[h!]
\centering

\includegraphics[width=0.3\textwidth]{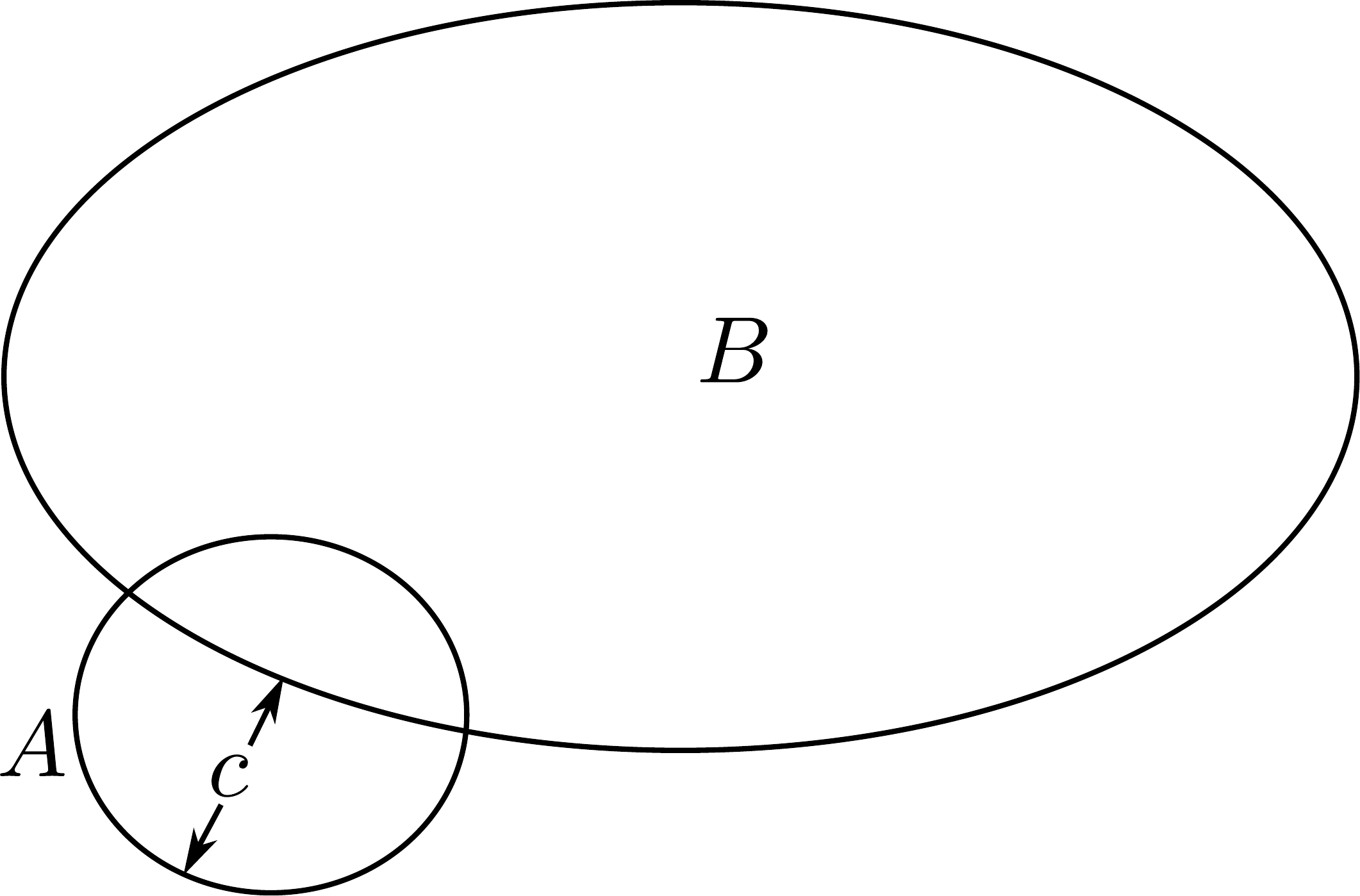}

\caption{The graded entailment in \LAE. Let $A$ and $B$ be the sets of worlds at which $\alpha$ and $\beta$ hold, respectively. Then $\alpha \aentails{c} \beta$ means that $A$ is in the $c$-neighbourhood of $B$. Note that $c$ varies between $0$ and $1$ and a smaller value of $c$ corresponds to a greater distance.}
\label{fig:Principle}
\end{figure}

Logics dealing with statements that are interpreted in metric spaces have been studied also from different points of view. Logics for spaces endowed with a metric or a more general distance function have been considered in a series of contributions by Kutz et al., see, e.g., \cite{KSSWZ,Kut}. Furthermore, logics on comparative similarity have been studied by Alenda et al., see, e.g., \cite{AlOl,AOP,AOS}. It is also worth mentioning that there are some connections with graded or fuzzy consequence relations as studied by Pavelka \cite{Pav,NPM}, Chakraborty \cite{Cha,ChDu}, and Gerla \cite{Ger} among others in the context of many-valued logics, since indeed, graded implications $\alpha \aentails{c} \beta$ capture, at a syntactic (meta-)level, the idea of $\beta$ being a consequence of $\alpha$ to the degree $c$. However, in the present context, $\alpha$ and $\beta$ are classical propositions, not many-valued ones.

The starting point for the present paper is the aforementioned logic \LAE. Although the concept underlying this logic is appealing, a disadvantage must be accepted. Deploying \LAE\ in applications is difficult for a simple reason: in \LAE{} we cannot combine conclusions in a conjunctive way. Assume that we have $\alpha \aentails{c} \beta$ and $\alpha \aentails{d} \gamma$, where $0 < c, d < 1$. Then we can not in general derive in \LAE\ a statement of the form $\alpha \aentails{e} \beta \land \gamma$ for some non-zero $e$.  This feature of \LAE\ is a straightforward consequence of the chosen semantic framework: if $\alpha$ implies that we are close to a situation in which $\beta$ holds and moreover close to a situation in which $\gamma$ holds, we cannot conclude that we are actually close to a situation in which both $\beta$ and $\gamma$ hold. In other words, for any sets of worlds $A$, $B$ and $C$, if $A$ is in the $c$-neighbourhood of $B$ as well as in the $d$-neighbourhood of $C$, we cannot make any prediction about the value $e$ such that $A$ is in the $e$-neighbourhood of $B \cap C$. Refer to Figure \ref{fig:Conjunction} for an illustration. In the extreme case, $\beta$ and $\gamma$ can even be contradictory. In such a case, there is no world at which both $\beta$ and $\gamma$ hold and $\beta \land \gamma$ will be interpreted by the empty set; but the $e$-neighbourhood of the empty set is empty for any $e$.

\begin{figure}[h!]
\centering

\includegraphics[width=0.3\textwidth]{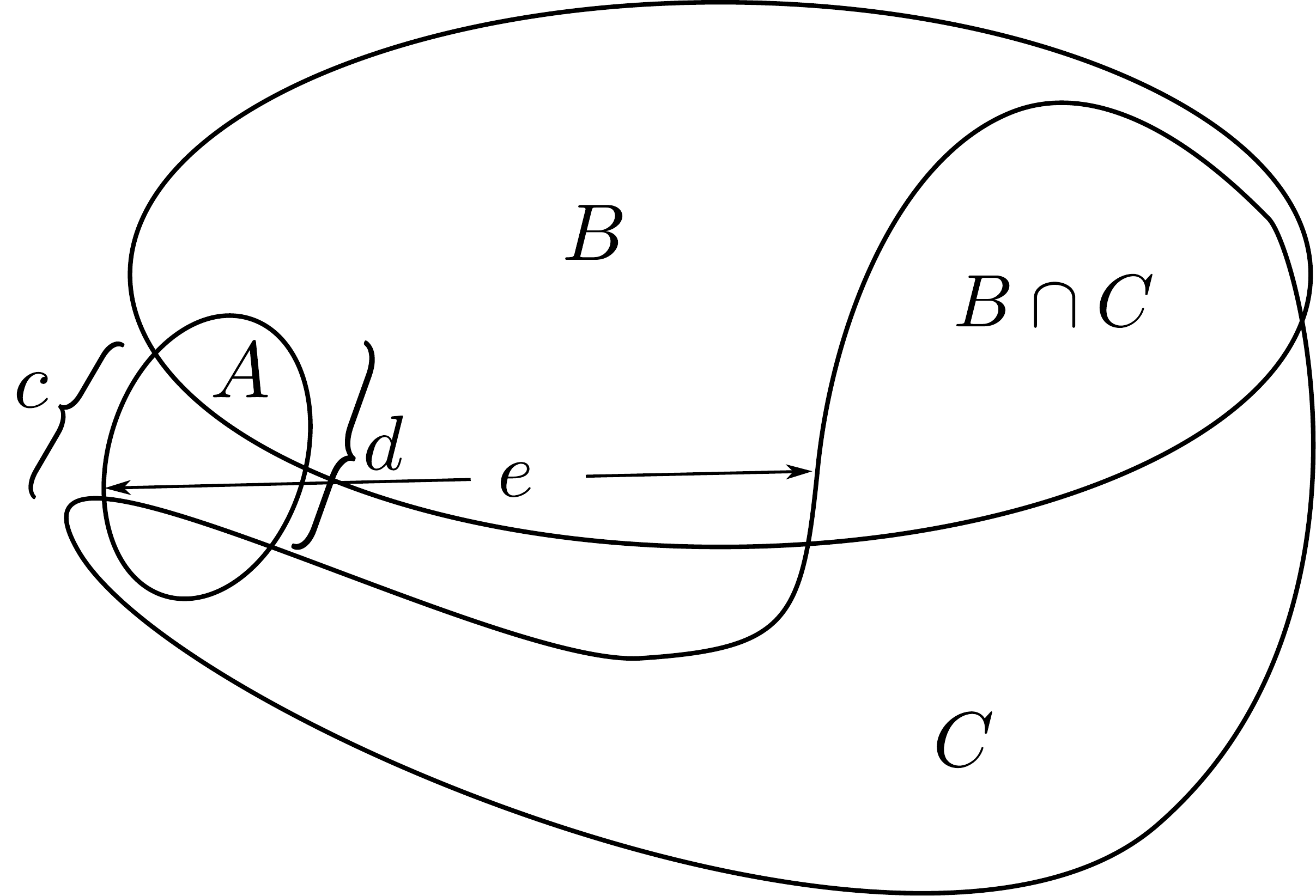}

\caption{The conjunction in \LAE. If $A$ is in the $c$-neighbourhood of $B$ as well as in the $d$-neighbourhood of $C$, we cannot make any prediction about the value $e$ such that $A$ is in the $e$-neighbourhood of $B \cap C$.}
\label{fig:Conjunction}
\end{figure}

The lack of a rule that combines conclusions in a conjunctive way may be found restrictive in applications. Let us consider the following example; let the symbols $\alpha$, $\beta$, $\gamma$ denote the following properties of a car:

\begin{tabular}{ll}
$\alpha$ & ``power(car) $=$ 110 CV'' \\
$\beta$  & ``price(car) $\geq$  20 000 \euro'' \\
$\gamma$ & ``consumption(car) $\geq$ 6 L/100km''
\end{tabular}

Assume that our domain knowledge tells us that powerful cars are expensive to some extent and at the same time they have a high consumption. These facts could be reflected by a theory containing the graded implications
\begin{equation} \label{fml:conjunction-example}
\alpha  \aentails{c} \beta, \quad \alpha  \aentails{d} \gamma,
\end{equation}
where $c$ and $d$ are some appropriate non-zero degrees. It then seems natural to be able to derive $\alpha \aentails{e} \beta \land \gamma$ for some positive degree $e$.

This situation is certainly not appropriately reflected by Figure \ref{fig:Conjunction}. The crucial difference is the independence of the properties $\beta, \gamma$ occurring in the conclusions. Price and consumption can indeed be assumed as not being interrelated. Consequently, a model can be based on a set of worlds consisting of all pairs of possible prices and possible consumption. Property $\alpha$, the power of the car, is in turn assumed to have an influence on the other two. To reflect this influence, $\alpha$ is to be identified with those pairs of a price and a consumption that are not in contradiction with it. Assuming, for instance, that a power of 110~CV implies a price range between 15~000~\euro{} and 30~000~\euro{} as well as a petrol consumption between 5~L/100km and 9~L/100km, our model would be the one indicated in Figure \ref{fig:Conjunction-independent}. Finally, a similarity between worlds can be computed as an aggregation of the similarities with regard to $\beta$ and $\gamma$, like for instance their minimum. Under these assumption, we are able to derive from (\ref{fml:conjunction-example}) the implication $\alpha \aentails{\min(a,b)} \beta \land \gamma$.

\begin{figure}[h!]
\centering

\includegraphics[width=0.3\textwidth]{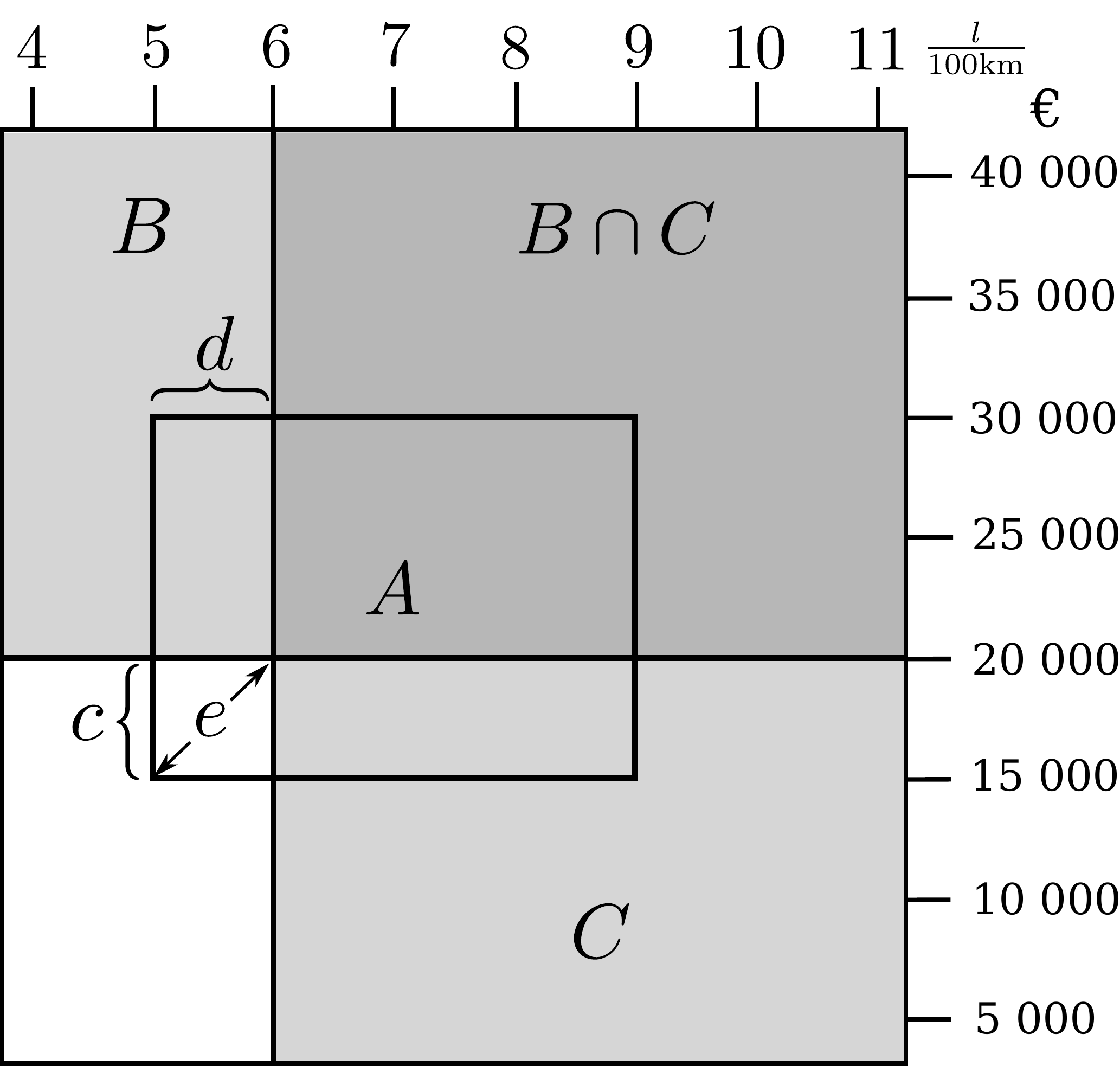}

\caption{The conjunction of independent properties, modelled by $B$ and $C$. If a set $A$ is in the $c$-neighbourhood of $B$ and in the $d$-neighbourhood of $C$, then $A$ is also in the $e$-neighbourhood of $B \cap C$, where $e$ is calculated from $c$ and $d$ according to some aggregation function.}
\label{fig:Conjunction-independent}
\end{figure}

We discuss in this paper extensions of \LAE\ that are tailored to a scenario of this kind. We shall consider two logics. As a first step, we define the logic \LAEC\ corresponding to the particular semantics where the sets of worlds in the similarity spaces are totally ordered. We show that in this case the approximate graded implication $\aentails{c}$ allows, under a natural condition, the conjunctive combination of conclusions.

In a second step, we consider the many-sorted logic \LAEPC\ whose semantics is based on similarity spaces which are Cartesian products of totally ordered ones (i.e.\  where the set of worlds is a Cartesian product of chains and the similarity over the product space is defined as the minimum of the similarities over the components). In this logic, the components in the product spaces correspond to different sorts in the logic, and  conjunctive combination of conclusions are supported whenever, roughly speaking, they are of a different sort.

The paper is organised as follows. After this introduction we review in Section \ref{sec:LAE} the basic definitions and results of the logic \LAE. In Section \ref{sec:LAEC} we present the logic  \LAEC\ for totally ordered similarity spaces, while in Section \ref{sec:LAEPC}  we introduce the more general logic \LAEPC, which can cope with products of totally ordered similarity spaces. The final Section \ref{sec:conclusion} contains some conclusions and additional remarks.

\section{The Logic of Approximate entailment}
\label{sec:LAE}

The Logic of Approximate Entailment \LAE{} is a propositional logic where propositions are interpreted by subsets of a fixed set, called the set of worlds. Propositions may be logically combined like in classical propositional logic; the Boolean connectives are interpreted by the set-theoretic operations. We will, in addition, assume that the set of worlds is endowed with a similarity relation. The core syntactic objects are implications between propositions, endowed with a degree. The intended meaning of a statement of the form $\alpha \aentails{c} \beta$ is that $\alpha$ implies $\beta$ to the degree $c$, i.e., $\beta$ is an approximate consequence of $\alpha$ to the degree $c$, where $c$ is a real number between $0$ and $1$. For $c = 1$, the implication is the classical one: if $\alpha$ holds at a world $w$, then so does $\beta$. For $c < 1$, the statement is weakened as follows: if $\alpha$ holds at a world $w$, there is a world $w'$ at which $\beta$ holds and the similarity of $w'$ and $w$ is at least $c$.

Formally, we specify \LAE{} as follows. We proceed largely in accordance with \cite{EGRV}  and we refer to this paper for further details. The original account is, however, due to \cite{Rod}; see also \cite{GoRo}. A further, recent approach to the axiomatisation of \LAE{} is contained in \cite{Vet}.

We start with a finite number $\phi_1, \ldots, \phi_N$ of variables. The number $N \geq 1$ can be chosen arbitrarily, but will be fixed. The {\it basic expressions} of \LAE{} are built up from the variables as well as the constants $\false, \true$ by means of the binary operators $\land$ and $\lor$ and the unary operator $\neg$. We denote the set of basic expressions by $\mathcal B$.

Furthermore, we choose a subset $V$ of the real unit interval containing $0$ and $1$. The elements of $V$ will be used as degrees of approximation. In a finitary setting it is moreover reasonable to assume that $V$ is finite and this is what we will do in the sequel. To express transitivity of the approximate entailment relation, we need to endow $V$ with a binary operation $\tand$ fulfilling the following conditions: $\tand$ is associative; $\tand$ is commutative; $1$ is neutral w.r.t.\ $\tand$; and $\tand$ is monotone in both arguments. In other words, $\tand$ makes $V$ into a finite, integral, commutative totally ordered monoid. In the present context, $\tand$ is also called a discrete or finite t-norm; see \cite{GoSi,MaTo}, cf.\ also \cite{DeMe}. We will assume that the pair $(V, \tand)$ is chosen arbitrarily, but fixed throughout this paper.

A {\it graded implication} of \LAE{} is a triple consisting of two basic expressions $\phi$ and $\psi$ as well as an element $c \in V$; we write
\[ \phi \aentails{c} \psi. \]
\LAE{} uses a two-level language and graded implications represent its inner level. At  the outer level, we use the classical connectives to combine graded implications into more complex expressions. That is, {\it formulas} of \LAE{} are built up from  graded implications by means of the binary operators $\land$ and $\lor$ and the unary operator $\neg$. The additional (definable) connectives $\impl$ and $\eq$ will have the usual meaning in classical propositional logic (\CPL{} for short) and to avoid brackets, these connectives will be given lowest precedence.

We next specify the semantics of \LAE.

\begin{definition} \label{def:similarity-space}
Let $W$ be a non-empty set. Let $S \colon W^2 \to V$ be such that, for any $u, v, w \in W$, (i) $S(u,v) = 1$ if and only if $u = v$; (ii) $S(u,v) = S(v,u)$; and (iii) $S(u,w) \geq S(u,v) \tand S(v,w)$. Then we call $(W,S)$ a {\it similarity space} based on $(V, \tand)$.
\end{definition}

The similarity space $(W,S)$ is intended to assume the role of a set of worlds $W$ endowed with a similarity relation $S$ that measures the resemblance of worlds. Each element $w \in W$ will give rise to a yes-no assignment of our variables and we will assume that $w$ is actually uniquely determined by this assignment. Hence our basic semantic structures will in fact be similarity spaces of finite cardinality.

For some degree $c \in V$ and a subset $A$ of a finite similarity space $(W,S)$, we put
\[ U_c(A) \;=\;
  \{ w \in W \colon \text{there is an $a \in A$ such that $S(w,a) \geq c$} \}. \]
Note that then $U_c(A) = \bigcup_{a \in A} U_c(\{a\})$ and in particular $U_c(\emptyset) = \emptyset$.

\begin{definition}
An {\it evaluation} for \LAE{} in a finite similarity space $(W, S)$ is a mapping $e \colon {\mathcal B} \to {\mathcal P}(W)$ such that (i) for any $\phi, \psi \in {\mathcal B}$, $e(\phi \land \psi) = e(\phi) \cap e(\psi)$, $\; e(\phi \lor \psi) = e(\phi) \cup e(\psi)$, $\; e(\neg\phi) = W \setminus e(\phi)$, $\; e(\false) = \emptyset$, and $e(\true) = W$ and (ii) for any distinct elements $v, w \in W$ there is a variable $\phi$ such that $e(\phi)$ contains exactly one of $v$ and $w$.


Moreover, the evaluation $e$ in $(W,S)$ is said to {\it satisfy} a graded implication $\phi \aentails{c} \psi$, written $(W, S, e) \models \phi \aentails{c} \psi$, if
\[ e(\phi) \subseteq U_c(e(\psi)); \]
the satisfaction of the remaining formulas of \LAE{} is defined in accordance with classical propositional logic.

Finally, a {\it theory} of \LAE{} is a set of formulas. We say that a theory $\mathcal T$ {\it semantically entails} a formula $\Phi$, written $\mathcal{T} \modelsLAE \Phi$, if the following holds: for any finite similarity space $(W, S)$ and any evaluation $e$ in $(W, S)$, if $(W, S, e) \models \Psi$ for all $\Psi \in {\cal T}$, then $(W, S, e) \models \Phi$. 
\end{definition}

We have defined \LAE{} on a semantic basis; we now turn to its axiomatisation. We will need an additional syntactic concept. By a {\it literal}, we mean a variable or a negated variable. A {\it maximally elementary conjunction}, or m.e.c.\ for short, is a conjunction of literals in which each variable of \LAE{} occurs exactly once. We note that these formulas are also known in the literature as {\em min-terms}.

\begin{definition} \label{def:LAE-proofs}
The following are axioms of \LAE, for any $\phi, \psi, \chi \in {\mathcal B}$ and $c, d \in V$:
\BeginAxioms
\Axiom{A1} $\phi \aentails{1} \psi$, where $\phi, \psi$ are such that $\phi \impl \psi$ is a tautology of \CPL
\Axiom{A2} $(\phi \aentails{1} \psi) \impl (\phi \land \neg\psi \aentails{1} \false)$
\Axiom{A3} $(\phi \aentails{c} \psi) \impl (\phi \aentails{d} \psi)$, where $d \leq c$
\Axiom{A4} $\lnot(\psi \aentails{1} \false) \impl (\phi \aentails{0} \psi)$
\Axiom{A5} $(\phi \aentails{c} \false) \impl (\phi \aentails{1} \false)$
\Axiom{A6} $\lnot(\delta \aentails{1} \false) \land (\delta \aentails{c} \epsilon) \;\impl\; (\epsilon \aentails{c} \delta)$, where $\delta$ and $\epsilon$ are m.e.c.'s

\Axiom{A7} $(\phi \aentails{c} \chi) \land (\psi \aentails{c} \chi) \;\impl\; (\phi \lor \psi \aentails{c} \chi)$
\Axiom{A8} $(\epsilon \aentails{c} \phi \lor \psi) \;\impl\; (\epsilon \aentails{c} \phi) \lor (\epsilon \aentails{c} \psi)$, where $\epsilon$ is a m.e.c.
\Axiom{A9} $(\phi \aentails{c} \psi) \land (\psi \aentails{d} \chi) \impl (\phi \aentails{c \tand d} \chi)$
\Axiom{A10} $\lnot(\top \aentails{1} \false)$
\Axiom{A11} For any tautology of \CPL, the formula resulting from a uniform replacement of the variables by graded implications.
\EndAxioms
Finally, modus ponens is the only rule of \LAE{}: for any formulas $\Phi, \Psi$
\BeginAxioms
\Axiom{MP} $\displaystyle{\frac{\Phi \quad \Phi \impl \Psi}{\Psi}}$
\EndAxioms
A proof of a formula $\Phi$ from a theory $\mathcal T$ is defined as usual. If it exists, we write $\mathcal{T} \vdash_{\mathrm{LAE}} \Phi$.

A theory $\mathcal T$ is called {\it consistent} if $\mathcal T$ does not prove $\true \aentails{1} \false$.
\end{definition}

Let us shortly comment on the axioms (A1)--(A10) of \LAE. Basic expressions are interpreted by subsets of a similarity space and $\aentails{1}$ corresponds to the subsethood relation; thus the meaning of (A1) and (A2) is clear. Recall furthermore that, in general, $\aentails{c}$ expresses that a set is contained in the $c$-neighbourhood of another set. A $c$-neighbourhood is contained in a $d$-neighbourhood if $d \leq c$; hence also (A3) is justified. (A4) says that the $0$-neighbourhood of a non-empty set is the whole set of worlds. (A5) in turn expresses that the $c$-neighbourhood of the empty set is the empty set for whatever value $c$.

Next note that, by part (ii) of the definition of an evaluation, the set interpreting a m.e.c.\ contains at most one element. Accordingly, (A6) means that if a singleton is in the $c$-neighbourhood of another singleton, then also the converse holds. (A7) expresses that if two sets are in the $c$-neighbourhood of a further set, then so is its union. And (A8) says that if an element is in the $c$-neighbourhood of the union of sets, then it is already in the $c$-neighbourhood of one of these sets.

(A9) is the transitivity for the neighbourhood relations. Finally, (A10) asserts that the set of worlds is not empty. Note that, due to axiom (A10), consistency in \LAE{} is equivalent to consistency in the sense of classical propositional logic.

Let us moreover remark that \LAE{} satisfies the rule of substitution of classical equivalents. Indeed, if $\alpha \leftrightarrow \alpha'$ and $\beta \leftrightarrow \beta'$ are tautologies of \CPL, then $(\alpha \aentails{c} \beta) \leftrightarrow (\alpha' \aentails{c} \beta')$. For, we conclude by (A1) that $\alpha' \aentails{1} \alpha$ and $\beta \aentails{1} \beta'$ and hence, by (A9), $(\alpha \aentails{c} \beta) \rightarrow (\alpha' \aentails{c} \beta')$. Similarly we argue for the converse direction.

We have the following soundness and completeness theorem for \LAE. We include a proof that is just detailed enough to serve as a reference in the subsequent sections; for full details, we refer to \cite{EGRV}.

\begin{theorem} \label{thm:completeness-LAE}
Let $\mathcal T$ be a theory and $\Phi$ be a formula of \LAE. Then ${\mathcal T} \provesLAE \Phi$ if and only if ${\mathcal T} \modelsLAE \Phi$.
\end{theorem}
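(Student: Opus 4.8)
\emph{Soundness.} The plan is to verify that every instance of (A1)--(A11) is valid in each finite similarity space and that (MP) preserves validity; soundness then follows by an induction on the length of proofs. The schema (A11) together with (MP) reduces all reasoning on the outer level to classical propositional logic, so it suffices to treat the graded implications. Here the guiding facts are that $U_1(A)=A$ (so $\aentails{1}$ is just set inclusion), that $A\subseteq B$ implies $U_c(A)\subseteq U_c(B)$, and that $U_d(A)\subseteq U_c(A)$ whenever $c\leq d$. These yield (A1)--(A5) directly, using $U_c(\emptyset)=\emptyset$ for (A5) and the definition of $U_0$ for (A4). Axiom (A6) uses the symmetry of $S$ applied to the (at most singleton) sets interpreting m.e.c.'s; (A7) is monotonicity of $U_c$ under unions; (A8) exploits that a m.e.c.\ is interpreted by a set with at most one element, so if that element lies in $U_c(e(\phi)\cup e(\psi))=U_c(e(\phi))\cup U_c(e(\psi))$ it already lies in one disjunct; and (A9) is the $\tand$-triangle inequality $S(w,b)\geq S(w,a)\tand S(a,b)$ combined with monotonicity of $\tand$. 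Axiom (A10) holds because $W\neq\emptyset$.

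\emph{Completeness.} I argue by contraposition and build a canonical countermodel. Suppose $\mathcal T\doesnotprove_{\mathrm{LAE}}\Phi$. Treating each graded implication as a propositional atom, the schema (A11) and (MP) make the outer level a conservative copy of \CPL; hence $\mathcal T$, together with all instances of (A1)--(A10) and $\neg\Phi$, is classically consistent, and there is a Boolean valuation $v$ of the atoms satisfying all of these. I now read off a model from $v$. Let the worlds $W$ be the \emph{consistent} m.e.c.'s, i.e.\ those $\delta$ with $v(\delta\aentails{1}\false)=0$; define the evaluation by letting $e(\phi)$ be the set of consistent m.e.c.'s $\delta$ with $\delta\impl\phi$ a tautology of \CPL; and define, for distinct consistent m.e.c.'s, $S(\delta,\epsilon)=\max\{c\in V:\ v(\delta\aentails{c}\epsilon)=1\}$, with $S(\delta,\delta)=1$. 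The maximum is over a non-empty set by (A4) (since $\epsilon$ is consistent) and is well behaved because (A3) makes $\{c:\ v(\delta\aentails{c}\epsilon)=1\}$ downward closed in $V$.

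The next step is to check that $(W,S)$ is a genuine similarity space and $e$ a genuine evaluation. Reflexivity is built in, and $S(\delta,\epsilon)=1$ forces $\delta=\epsilon$: distinct m.e.c.'s satisfy $\delta\impl\neg\epsilon$, so $v(\delta\aentails{1}\epsilon)=1$ would give, via (A1) and (A2), $v(\delta\aentails{1}\false)=1$, contradicting consistency of $\delta$. Symmetry $S(\delta,\epsilon)=S(\epsilon,\delta)$ comes from (A6) applied with the premise $\neg(\delta\aentails{1}\false)$, and the $\tand$-transitivity $S(\delta,\chi)\geq S(\delta,\epsilon)\tand S(\epsilon,\chi)$ from (A9). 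That $e$ respects the Boolean structure is immediate, and the separation condition (ii) holds because distinct m.e.c.'s disagree on some literal.

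The crux is the truth lemma: $(W,S,e)\models(\phi\aentails{c}\psi)$ if and only if $v(\phi\aentails{c}\psi)=1$. For two consistent m.e.c.'s $\delta,\epsilon$ this is immediate, since both sides say $S(\delta,\epsilon)\geq c$. The general case proceeds by writing $\phi$ and $\psi$ as disjunctions of m.e.c.'s (using substitution of classical equivalents) and reducing: the antecedent is split by (A7), with its converse supplied by (A1) and (A9), so $v(\phi\aentails{c}\psi)=1$ iff $v(\delta\aentails{c}\psi)=1$ for every m.e.c.\ $\delta\leq\phi$; the consequent is split by (A8), again with converse from (A1) and (A9), so for a m.e.c.\ $\delta$ one has $v(\delta\aentails{c}\psi)=1$ iff $v(\delta\aentails{c}\epsilon)=1$ for some m.e.c.\ $\epsilon\leq\psi$. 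Matching this against the semantic condition ``for every $\delta\in e(\phi)$ there is $\epsilon\in e(\psi)$ with $S(\delta,\epsilon)\geq c$'' then only requires the bookkeeping for inconsistent m.e.c.'s: an inconsistent antecedent makes both sides trivially true (via $\false\aentails{1}\psi$, (A9), (A3)), while an inconsistent consequent forces both sides false for a consistent antecedent (a putative $\phi\aentails{c}\psi$ would yield $\phi\aentails{c}\false$ and hence, by (A5), $\phi\aentails{1}\false$). I expect this truth lemma to be the main obstacle, precisely because of the interplay of the decomposition axioms (A7), (A8) with the empty-set edge cases governed by (A4) and (A5). Once it is established, the classical structure of both $v$ and the outer-level satisfaction relation lift it to all formulas, so $(W,S,e)$ satisfies $\mathcal T$ but not $\Phi$, giving $\mathcal T\not\modelsLAE\Phi$ as required.
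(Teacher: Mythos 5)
Your proposal is correct and follows essentially the same route as the paper: a canonical countermodel whose worlds are the consistent m.e.c.'s, with the similarity defined as the maximal provable degree and a truth lemma established via (A1), (A3)--(A5), (A7), (A8). Working with a Boolean valuation of the graded implications rather than a complete extension of $\mathcal T$ (the paper's Lindenbaum-algebra presentation) is only a cosmetic difference, since (A11) and (MP) make the two formulations interchangeable.
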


\begin{proof}
 The ``only if'' part follows from the soundness of the axioms (A1)--(A10), which is easily checked; cf.\ the explanations after Definition \ref{def:LAE-proofs}.

To show the ``if'' part, assume that $\mathcal T$ does not prove $\Phi$. Extending $\mathcal T$ if necessary, we can w.l.o.g.\ assume that $\mathcal T$ is complete, that is, ${\mathcal T}  \provesLAE \Psi$ or ${\mathcal T} \provesLAE \lnot\Psi$ for each formula $\Psi$.

For $\phi, \psi \in {\mathcal B}$, let $\phi \stronger \psi$ if  ${\mathcal T} \proves \phi \aentails{1} \psi$. Let $\equallystrong$ be the symmetrisation of $\stronger$, that is, let $\phi \equallystrong \psi$ if $\phi \stronger \psi$ and $\psi \stronger \phi$. We conclude from (A1) and (A2) that $\equallystrong$ is an equivalence relation and in fact a congruence with respect to $\land$, $\lor$, and $\lnot$. We denote the $\equallystrong$-class of some $\phi \in {\mathcal B}$ by $\equallystrongcl{\phi}$. Let $\mathcal L = \{ \equallystrongcl{\phi} \colon \phi \in \mathcal B \}$; then  $\mathcal L$, endowed with the induced operations $\land$, $\lor$, $\lnot$ as well as the constants $\equallystrongcl{\false}$, $\equallystrongcl{\true}$, is a Boolean algebra.

As there are only finitely many variables, $\mathcal B$ and consequently also $\mathcal L$ are finite. Furthermore, any $\phi \in \mathcal B$ is the supremum of m.e.c.s and hence the atoms of $\mathcal L$ are $\equallystrongcl{\epsilon}$, where $\epsilon$ is a m.e.c.\ such that ${\mathcal T} \doesnotprove \epsilon \aentails{1} \false$. Let $W$ be the set of atoms of $\mathcal L$, and for $\phi \in {\mathcal B}$, let
\begin{equation} \label{fml:evaluation}
e(\phi) \;=\;
   \{ \equallystrongcl{\epsilon} \in W \colon \epsilon \stronger \phi \}.
\end{equation}
Then $e \colon {\mathcal B} \to {\mathcal P}(W)$ is an evaluation for \LAE.

For $\equallystrongcl{\delta}, \equallystrongcl{\epsilon} \in W$, we define
\begin{equation} \label{fml:derived-similarity}
S(\equallystrongcl{\delta}, \equallystrongcl{\epsilon}) \;=\; \max\; \{ c \in V \colon {\mathcal T} \proves \delta \aentails{c} \epsilon \};
\end{equation}
note that, due to the finiteness of $V$, the maximum always exists. By (A1), (A6) and (A9), respectively, $S$ is reflexive, symmetric and $\odot$-transitive. Furthermore, if $S(\equallystrongcl{\delta}, \equallystrongcl{\epsilon}) = 1$, then ${\mathcal T} \proves \delta \aentails{1} \epsilon$; this means that ${\mathcal T} \proves \epsilon  \aentails{1} \delta$ holds as well, hence $\delta \equallystrong \epsilon$. We conclude that $S$ is a similarity relation. Moreover, we have, for any $\phi, \psi \in {\mathcal B}$,
\begin{equation} \label{fml:LAE-proof}
{\mathcal T} \proves \phi \aentails{c} \psi \quad\text{iff}\quad
e(\phi) \subseteq U_c(e(\psi)).
\end{equation}
Indeed, the case that $\phi \equallystrong \false$ or $\psi \equallystrong \false$ or $c = 0$ is covered by (A1), (A4), and (A5). Otherwise, we have by (A1), (A3), (A7), (A8), and the completeness of $\mathcal T$ that ${\mathcal T} \proves \phi \aentails{c} \psi$ iff ${\mathcal T} \proves \bigvee_{\delta \stronger \phi} \delta \aentails{c} \bigvee_{\epsilon \stronger \psi} \epsilon$, where $\delta$ and $\epsilon$ are meant to refer to m.e.c.'s. Now the latter holds iff for any $\delta \stronger \phi$ there is an $\epsilon \stronger \psi$ such that ${\mathcal T} \proves \delta \aentails{c} \epsilon$, and this holds iff for any $\equallystrongcl{\delta} \in e(\phi)$ there is an $\equallystrongcl{\epsilon} \in e(\psi)$ such that $S(\equallystrongcl{\delta}, \equallystrongcl{\epsilon}) \geq c$, and finally this holds iff $e(\phi) \subseteq U_c(e(\psi))$. 

It follows that $e$ satisfies all elements of $\mathcal T$, but not $\Phi$. That is, $\mathcal T$ does not semantically entail $\Phi$.
\end{proof}

\section{Approximate Entailment on a Chain}
\label{sec:LAEC}

The Logic of Approximate Entailment \LAE, which we have defined in the previous section, might be appealing because of its transparency and simplicity. However, we should admit that the practical usability of \LAE{} is limited. Roughly speaking, we may observe that \LAE{} is well-behaved as regards the logical disjunction, but poorly behaved as regards the logical conjunction. In \LAE, like in classical propositional logic, we can indeed derive from $\phi \aentails{c} \chi$ and $\psi \aentails{c} \chi$ that $\phi \lor \psi \aentails{c} \chi$, and vice versa. In contrast, assume that we have $\phi \aentails{c} \chi$ and $\phi \aentails{c} \psi$, a probably even more common situation. In general, there is no way to derive in \LAE{} from these statements alone that $\phi \aentails{d} \chi \land \psi$ such that $d > 0$. The only exception is the case $c = 1$, which allows classical reasoning.

To respond to this weakness, we consider in this subsection a more special framework. We assume that we proceed in accordance with typical applications. Indeed, configurations are often described by parameters and distinctions are often made by the reference to a totally ordered structure. In this paper, we consider two variants of \LAE{} that are based on exactly this assumption.

In our first step, which is the topic of this section, we will assume that our set of worlds is equipped with a total order, so they form a chain. We will extend our language so as to be able to refer to the total order. In a subsequent step, discussed in Section \ref{sec:LAEPC}, we will go further and work with a direct product of chains.

In order to be able to refer to a total order, we will use two modal operators, denoted by $\smaller$ and $\larger$. Disregarding the similarity relation, our new calculus is closely related to the modal logic S4.3; see, e.g., \cite{HuCr}. Our notation is chosen accordingly; the particular symbol $\smaller$ is borrowed from \cite{vBGR}. S4.3 can be viewed as the logic of total preorders, where in a Kripke model a proposition $\Diamond\phi$ is interpreted by the set consisting of the worlds at which $\phi$ holds as well as those that are below them according to the given preorder. Replacing ``preorder'' with ``order'', this is exactly the way we will interpret $\smaller$. In addition, we add the operator $\larger$, which will be interpreted in a dual way.

Let us now formally specify the Logic for Approximate Entailment on a Chain, or \LAEC{} for short. We fix again a finite number of variables $\phi_1, \ldots, \phi_N$. The basic expressions of \LAEC{} are built up from the variables and the constants $\false, \true$ by means of the Boolean connectives and the two unary operators $\smaller$ and $\larger$. To denote the set of basic expressions of \LAEC, we use again the symbol $\mathcal B$. A basic expression of the form $\larger\phi$ or $\smaller\phi$ for some $\phi \in {\mathcal B}$ will be called a {\em diamond expression}. Similarly to the case of \LAE, graded implications of \LAEC{} are expressions of the form $\varphi \aentails{c} \psi$, where $\varphi, \psi \in \mathcal B$, and formulas of \LAEC{} are Boolean combinations of graded implications.


On the semantic side, our basic models will include a total order.

\begin{definition} \label{def:totally-ordered-similarity-space}
We call a triple $(W, S, \below)$ a {\it totally ordered similarity space} if: (i) $(W,S)$ is a finite similarity space based on $(V, \tand)$, and (ii) $\below$ is a total order on $W$ such that, for $u, v, w \in W$, $u \below v \below w$ implies $\min(S(u,v), S(v,w)) \geq S(u,w)$.
\end{definition}

Note that condition (ii) of Definition \ref{def:totally-ordered-similarity-space} requires  the similarity relation $S$ in a totally ordered similarity space $(W, S, \leq)$ to be compatible with the underlying ordering $\leq$ in a natural sense: proceeding from some element $u$ along the chain upwards or downwards, the similarity with $u$ becomes successively smaller.

In order to define the interpretation of the two new unary operators of \LAEC, let us make the following definitions. Let $(W, S, \below)$ be a totally ordered similarity space and let $A \subseteq W$ be a subset of worlds; then we put:
\begin{equation} \label{fml:diamond-on-similarity-space}
\begin{split}
\smaller A \;=\; & \{ v \in W \colon
 \text{there is a $w \in A$ such that $v \below w$} \}, \\
\larger A \;=\; & \{ v \in W \colon
 \text{there is a $w \in A$ such that $v \above w$} \}.
\end{split}
\end{equation}

In a finite chain $(W; \below)$,  given two elements $u, v \in W$ such that $u \leq v$, we call a set of the form $[u,v] = \{ w \in W \colon u \leq w \leq v \}$ an {\it interval} of $W$. If $u$ is the bottom element of $W$, we also write $(-\infty,v]$; if $v$ is the top element, we also write $[u,\infty)$. Note that, for any non-empty $A \subseteq W$, we have
\begin{align*}
\smaller A \;=\; & (-\infty, \max A], \\
\larger A \;=\; & [\min A, \infty).
\end{align*}
Therefore, $\smaller A$ and $\larger A$ are intervals in $W$. Trivially, the intersection of intervals is either empty or an interval as well. Note that, in particular, $\smaller A \cap \larger A$ is the smallest interval containing $A$.

The following lemma compiles some obvious properties of the operators (\ref{fml:diamond-on-similarity-space}).

\begin{lemma} \label{lem:diamond-on-similarity-space}
Let $(W, S, \below)$ be a totally ordered similarity space. Then we have, for any $A, B \subseteq W$ and $c \in V$:
\EnumeratedList
\Number{i} $A \subseteq \smaller A$.
\Number{ii} $\smaller{\smaller A} = \smaller A$.
\Number{iii} $\smaller\emptyset = \emptyset$.
\Number{iv} At least one of $\smaller A \subseteq \smaller B$ or $\smaller B \subseteq \smaller A$ holds.
\Number{v} If $A \subseteq U_c(B)$, then $\smaller A \subseteq U_c \smaller B$.

In particular, $A \subseteq B$ implies $\smaller A \subseteq \smaller B$.
\EndofEnumeratedList
In addition, each statement {\rm (i)}--{\rm (v)} still holds when we replace all symbols ``$\smaller$'' by ``$\larger$''.

Moreover, we have for all $A, B \subseteq W$:
\EnumeratedList
\Number{vi} For any $w \in W$, $\smaller{ \{w\} } \cap \larger{ \{w\} } = \{w\}$.
\Number{vii} $A \cap \smaller B = \emptyset$ implies $\larger A \cap \smaller B = \emptyset$. Similarly, $A \cap \larger B = \emptyset$ implies $\smaller A \cap \larger B = \emptyset$.
\EndofEnumeratedList
\end{lemma}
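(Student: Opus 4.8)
The plan is to reduce almost everything to the explicit interval description of the two operators recorded just before the lemma. For non-empty $A$ we have $\smaller A = (-\infty, \max A]$ and $\larger A = [\min A, \infty)$, while $\smaller\emptyset = \larger\emptyset = \emptyset$. Statements (i)--(iv) then become elementary facts about downward- (resp.\ upward-) closed intervals of a finite chain: (i) holds since every $a \in A$ satisfies $a \below a$; (ii) and (iii) are immediate from $\max(\smaller A) = \max A$ together with the empty-set convention; and (iv) follows because any two sets of the form $(-\infty, \max A]$ and $(-\infty, \max B]$ (including the empty interval) are nested, the direction being governed by whether $\max A \below \max B$ or the reverse. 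The arguments for $\larger$ are the mirror images, obtained by reversing the order.

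For (vi) and (vii) I would again argue through the interval picture. Statement (vi) is just antisymmetry of the total order: $v \in \smaller\{w\} \cap \larger\{w\}$ forces $v \below w$ and $v \above w$, hence $v = w$. For (vii), assuming $A, B$ non-empty (the empty cases being trivial), the hypothesis $A \cap \smaller B = \emptyset$ says that no element of $A$ lies in $(-\infty, \max B]$, so every $a \in A$ satisfies $a > \max B$ and thus $\min A > \max B$; consequently $\larger A = [\min A, \infty)$ is disjoint from $(-\infty, \max B] = \smaller B$. The second half of (vii) is dual.

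The real content is (v), and this is the step I expect to require actual work, since it is the only place where condition (ii) of Definition \ref{def:totally-ordered-similarity-space}---the compatibility of $S$ with the order---enters. Here is the approach. Fix $v \in \smaller A$; I want to show $v \in U_c(\smaller B)$. As $v \in \smaller A$, there is some $a \in A$ with $v \below a$, and as $A \subseteq U_c(B)$, there is some $b \in B$ with $S(a,b) \geq c$. I would then split on the position of $v$ relative to $b$. If $v \below b$, then $v \in \smaller B$ already, and since any set lies in its own $c$-neighbourhood (because $S(v,v) = 1 \geq c$), we get $v \in U_c(\smaller B)$. If instead $b \below v$, then $b \below v \below a$, and the compatibility condition yields $\min(S(b,v), S(v,a)) \geq S(b,a) \geq c$; in particular $S(v,b) \geq c$, and since $b \in B \subseteq \smaller B$ this again places $v$ in $U_c(\smaller B)$. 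Totality of the chain guarantees that these two cases are exhaustive, whence $\smaller A \subseteq U_c(\smaller B)$. The $\larger$-version is symmetric, and the concluding ``in particular'' clause is simply the case $c = 1$ of (v), using $U_1(B) = B$.
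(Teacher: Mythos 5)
Your proof is correct. The paper states this lemma without proof (merely calling the properties ``obvious''), and your argument --- the interval descriptions $\smaller A = (-\infty,\max A]$, $\larger A = [\min A,\infty)$ for parts (i)--(iv), (vi), (vii), and the case split $v\below b$ versus $b\below v\below a$ invoking the compatibility condition of Definition~\ref{def:totally-ordered-similarity-space}(ii) for part (v) --- supplies exactly the intended verification, correctly identifying (v) as the one claim with real content.
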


One more statement concerns our very motivation to define the logic \LAEC: the behaviour of the approximate implication $\aentails{c}$ with respect to conjunction.

\begin{lemma} \label{lem:intersection-of-intervals}
Let $(W, S, \below)$ be a totally ordered similarity space. For intervals $A$ and $B$ of $W$ with a non-empty intersection and for any $c \in V$, we have
\[ U_c(A \cap B) \;=\; U_c(A) \cap U_c(B). \]
\end{lemma}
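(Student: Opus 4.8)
The plan is to prove the two inclusions separately. The inclusion $U_c(A \cap B) \subseteq U_c(A) \cap U_c(B)$ is immediate and uses neither the order nor the non-emptiness hypothesis: since $A \cap B \subseteq A$ and $A \cap B \subseteq B$, and since $U_c$ is visibly monotone with respect to $\subseteq$ (if $a \in A \cap B$ witnesses $w \in U_c(A\cap B)$, the same $a$ witnesses membership in $U_c(A)$ and in $U_c(B)$), we obtain $U_c(A \cap B) \subseteq U_c(A)$ and $U_c(A \cap B) \subseteq U_c(B)$, hence the stated inclusion.

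The substantial direction is $U_c(A) \cap U_c(B) \subseteq U_c(A \cap B)$. First I would isolate the single geometric fact about $S$ that does all the work: \emph{if $x$ lies between $w$ and $y$ in the chain, i.e. $\min(w,y) \leq x \leq \max(w,y)$, then $S(w,x) \geq S(w,y)$}. This follows directly from condition (ii) of Definition \ref{def:totally-ordered-similarity-space}: applying it to the triple $w \leq x \leq y$ (or $y \leq x \leq w$) and using symmetry of $S$ yields $S(w,x) \geq S(w,y)$. Intuitively, sliding a point towards $w$ along the chain can only increase its similarity to $w$.

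With this monotonicity in hand the argument is a nearest-point construction. Write $A \cap B = [m_1, m_2]$, which is a non-empty interval by hypothesis, with $m_1 = \max(\min A, \min B)$ and $m_2 = \min(\max A, \max B)$. Fix $w \in U_c(A) \cap U_c(B)$ and choose witnesses $a \in A$, $b \in B$ with $S(w,a) \geq c$ and $S(w,b) \geq c$. Let $x^*$ be the point of $A \cap B$ closest to $w$: namely $x^* = m_1$ if $w < m_1$, $\ x^* = m_2$ if $w > m_2$, and $x^* = w$ if $w \in A \cap B$. I claim $S(w, x^*) \geq c$, which gives $w \in U_c(A \cap B)$ and finishes the proof. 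If $w \in A \cap B$ then $x^* = w$ and $S(w,x^*) = 1 \geq c$. If $w < m_1$, then $x^*= m_1$ equals $\min A$ or $\min B$; in the first subcase the witness satisfies $a \geq \min A = x^*$, so $w < x^* \leq a$, and the monotonicity fact gives $S(w,x^*) \geq S(w,a) \geq c$, while the subcase $x^* = \min B$ is identical with $b$ in place of $a$. The case $w > m_2$ is symmetric: $x^*$ equals $\max A$ or $\max B$, and placing $x^*$ between the corresponding witness and $w$ gives $S(w,x^*) \geq c$. In every case $x^* \in A \cap B$ and $S(w,x^*) \geq c$, as claimed.

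The only real obstacle is the middle step: recognizing that the nearest point of $A \cap B$ to $w$ always lies on the segment joining $w$ to one of the two witnesses $a, b$, so that compatibility condition (ii) can be invoked. This is exactly where the non-emptiness of $A \cap B$ and the fact that the relevant endpoint of $[m_1,m_2]$ is an endpoint of $A$ \emph{or} of $B$ are used. Once that geometric observation is made, everything reduces to the routine case split on the position of $w$ relative to $[m_1, m_2]$ above.
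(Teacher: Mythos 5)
Your proof is correct. The paper actually states Lemma \ref{lem:intersection-of-intervals} without any proof, treating it as evident from the compatibility condition (ii) of Definition \ref{def:totally-ordered-similarity-space} (which the paper paraphrases right after that definition as exactly the monotonicity fact you isolate: moving along the chain away from $u$, similarity with $u$ decreases). Your nearest-point argument --- writing $A \cap B = [m_1, m_2]$, observing that the relevant endpoint of $A \cap B$ is an endpoint of $A$ or of $B$ and hence lies between $w$ and one of the two witnesses --- is a sound and natural way to supply the missing details, and I see no gap in it.
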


Finally, we can define the evaluations in \LAEC. 

\begin{definition} \label{def:semantics-LAEC}
An {\it evaluation} $e$ for \LAEC{} in a totally ordered similarity space $(W, S,$ $\below)$  is defined as in case of \LAE; in addition, we require, for any basic expression $\phi$, that $e(\smaller\phi) = \smaller{e(\phi)}$ and $e(\larger\phi) = \larger{e(\phi)}$.\footnote{Note that we use the same symbols $\smaller$ and $\larger$ for both the syntactic and semantic operators, but it will be clear from the context when we refer to one or the other.}

Moreover, the notions of {\it satisfaction} and {\it semantic entailment} are defined analogously to the case of \LAE. We write $(W, S, \below, e) \models \Phi$ to denote that an evaluation $e$ in $(W, S, \below)$ satisfies an \LAEC\ formula $\Phi$, and $\mathcal{T} \modelsLAEC \Phi$ to denote that a theory $\cal T$ semantically entails $\Phi$ in \LAEC. 
\end{definition}

We now turn to the axiomatisation of \LAEC. Note that we may indeed observe a close relationship of \LAEC{} with the modal logic S4.3. Recall that S4.3 is the extension of the basic modal logic $K$ by the axioms $T$, $4$ and $H$. Our axioms (A12), (A13) are analogues of the modal axioms $T$ and $4$, respectively, and (A15) is related to axiom $H$, as we will see later.

\begin{definition} \label{def:LAEC-proofs}
The axioms of \LAEC{} are the following ones, for any basic expressions of \LAEC{} $\phi, \psi, \chi \in {\mathcal B}$ and any $c \in V$: 

\BeginAxioms
\Axiom{A1'} $\phi \aentails{1} \psi$, where, for formulas $\phi', \psi'$ of \CPL, $\phi' \impl \psi'$ is a tautology of \CPL{} and $\phi, \psi$ arise, respectively, from $\phi'$ and $\psi'$ by a uniform replacement of the variables occurring in $\phi'$ or $\psi'$ by basic expressions of \LAEC;
\EndAxioms

axioms (A2)--(A11);

\vspace{1ex}

\begin{minipage}{0.49\textwidth}
\BeginAxioms
\Axiom{A12a} $\phi \aentails{1} \smaller\phi$
\Axiom{A13a} $\smaller{\smaller\phi} \aentails{1} \smaller\phi$
\Axiom{A14a} $\smaller\false \aentails{1} \false$
\Axiom{A15a} $(\smaller\phi \aentails{1} \smaller\psi) \lor (\smaller\psi \aentails{1} \smaller\phi)$
\EndAxioms
\end{minipage}
\hfill
\begin{minipage}{0.49\textwidth}
\BeginAxioms
\Axiom{A12b} $\phi \aentails{1} \larger\phi$
\Axiom{A13b} $\larger{\larger\phi} \aentails{1} \larger\phi$
\Axiom{A14b} $\larger\false \aentails{1} \false$
\Axiom{A15b} $(\larger\phi \aentails{1} \larger\psi) \lor (\larger\psi \aentails{1} \larger\phi)$
\EndAxioms
\end{minipage}

\vspace{1ex}

\BeginAxioms
\Axiom{A16} $\smaller\epsilon \land \larger\epsilon \aentails{1} \epsilon$, where $\epsilon$ is a m.e.c.
\EndAxioms

\vspace{1ex}

\begin{minipage}{0.49\textwidth}
\BeginAxioms
\Axiom{A17a} $(\phi \aentails{c} \psi) \impl (\smaller\phi \aentails{c} \smaller\psi)$
\Axiom{A18a} $(\phi \land \smaller\psi \aentails{1} \false) \\  \hspace*{1.7cm}\impl (\larger\phi \land \smaller\psi \aentails{1} \false)$
\EndAxioms
\end{minipage}
\hfill
\begin{minipage}{0.49\textwidth}
\BeginAxioms
\Axiom{A17b} $(\phi \aentails{c} \psi) \impl (\larger\phi \aentails{c} \larger\psi)$
\Axiom{A18b} $(\phi \land \larger\psi \aentails{1} \false) \\  \hspace*{1.7cm}\impl (\smaller\phi \land \larger\psi \aentails{1} \false)$
\EndAxioms
\end{minipage}

\vspace{1ex}

\BeginAxioms
\Axiom{A19} $\lnot(\rho \land \sigma \aentails{1} \false) \land (\phi \aentails{c} \rho) \land (\phi \aentails{c} \sigma) \impl (\phi \aentails{c} \rho \land \sigma)$, \\ where $\rho, \sigma$ are conjunctions of diamond expressions.
\EndAxioms
Moreover, the only rule of \LAEC{} is (MP).
\end{definition}

Note that the last axiom $(\textrm{A19})$ literally captures the above mentioned conjunctive property of the $\aentails{c}$ relations. Indeed, a conjunction of diamond expressions is interpreted by an interval. Hence (A19) says that if two non-contradictory properties $\rho$ and $\sigma$ that correspond to intervals follow from the same premise $\varphi$ to the degree $c$, then their conjunction $\rho \land \sigma$ follows from $\varphi$ to the same degree $c$. Due to Lemma \ref{lem:intersection-of-intervals}, (A19) is obviously sound.

\begin{lemma} \label{lem:LAEC}
In \LAEC, we can derive, for any basic expressions $\phi$ and $\psi$,
\begin{equation} \label{fml:disjunction-rule}
\smaller(\phi \lor \psi) \aentails{1} \smaller\phi \lor \smaller\psi
\quad\text{\rm and} \quad
\smaller\phi \lor \smaller\psi \aentails{1} \smaller(\phi \lor \psi),
\end{equation}
and similarly
\begin{equation*}
\larger(\phi \lor \psi) \aentails{1} \larger\phi \lor \larger\psi.
\quad\text{\rm and} \quad
\larger\phi \lor \larger\psi \aentails{1} \larger(\phi \lor \psi).
\end{equation*}
\end{lemma}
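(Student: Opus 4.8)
The plan is to establish the two displayed $\smaller$-derivations; the two $\larger$-derivations then follow verbatim after replacing (A12a), (A13a), (A15a), (A17a) by their $b$-labelled counterparts (A12b), (A13b), (A15b), (A17b). I treat the two inclusions separately. For the direction $\smaller\phi \lor \smaller\psi \aentails{1} \smaller(\phi \lor \psi)$ I would argue as follows: since $\phi \impl \phi \lor \psi$ is a \CPL-tautology, (A1') gives $\phi \aentails{1} \phi \lor \psi$, whence (A17a) together with (MP) yields $\smaller\phi \aentails{1} \smaller(\phi \lor \psi)$, and symmetrically $\smaller\psi \aentails{1} \smaller(\phi \lor \psi)$. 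Feeding both into the disjunction axiom (A7) at degree $1$ gives $\smaller\phi \lor \smaller\psi \aentails{1} \smaller(\phi \lor \psi)$. This direction is routine.

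The converse $\smaller(\phi \lor \psi) \aentails{1} \smaller\phi \lor \smaller\psi$ is the substantive part, and here I would exploit the linearity axiom (A15a), namely $(\smaller\phi \aentails{1} \smaller\psi) \lor (\smaller\psi \aentails{1} \smaller\phi)$, through a case split at the outer classical level. Assume first $\smaller\phi \aentails{1} \smaller\psi$. Combining $\phi \aentails{1} \smaller\phi$ (A12a) with this assumption via transitivity (A9) gives $\phi \aentails{1} \smaller\psi$; together with $\psi \aentails{1} \smaller\psi$ (A12a), the disjunction axiom (A7) yields $\phi \lor \psi \aentails{1} \smaller\psi$. Applying the monotonicity axiom (A17a) gives $\smaller(\phi \lor \psi) \aentails{1} \smaller\smaller\psi$, and the idempotency axiom (A13a), $\smaller\smaller\psi \aentails{1} \smaller\psi$, followed once more by (A9), collapses this to $\smaller(\phi \lor \psi) \aentails{1} \smaller\psi$; finally $\smaller\psi \aentails{1} \smaller\phi \lor \smaller\psi$ (A1') and (A9) deliver the goal. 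The case $\smaller\psi \aentails{1} \smaller\phi$ is handled symmetrically, by swapping $\phi$ and $\psi$ throughout and using that $\psi \lor \phi$ and $\smaller\psi \lor \smaller\phi$ are classically equivalent to $\phi \lor \psi$ and $\smaller\phi \lor \smaller\psi$ respectively (substitution of \CPL-equivalents, as noted after Definition \ref{def:LAE-proofs}).

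To finish, I would discharge the two case assumptions by the deduction theorem, which is available at the outer level because (A11) supplies every \CPL-tautology over graded implications while (MP) is the only rule; this turns the two cases into the theorems $(\smaller\phi \aentails{1} \smaller\psi) \impl (\smaller(\phi \lor \psi) \aentails{1} \smaller\phi \lor \smaller\psi)$ and $(\smaller\psi \aentails{1} \smaller\phi) \impl (\smaller(\phi \lor \psi) \aentails{1} \smaller\phi \lor \smaller\psi)$. A constructive-dilemma instance of (A11), combined with these two implications and with (A15a) via (MP), then yields $\smaller(\phi \lor \psi) \aentails{1} \smaller\phi \lor \smaller\psi$. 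I expect the main obstacle to be exactly this converse inclusion: the one nonroutine idea is to push $\phi \lor \psi$ inside the diamond by first proving $\phi \lor \psi \aentails{1} \smaller\psi$ and then invoking $\smaller$-monotonicity together with $\smaller$-idempotency, the whole step resting on the order-linearity captured by (A15a).
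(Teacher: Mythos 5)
Your proposal is correct and follows essentially the same route as the paper's own proof: the easy inclusion via (A1') , (A17a) and (A7), and the converse via a case split on (A15a), deriving $\phi \lor \psi \aentails{1} \smaller\psi$ from (A12a) and then applying (A17a) and (A13a). You merely spell out in more detail the outer-level classical reasoning (deduction theorem and constructive dilemma) that the paper compresses into ``we reason by cases.''
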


\begin{proof}
We only show the first half of the lemma; the second one is seen analogously.

From $\phi \aentails{1} \smaller\phi$ and $\psi \aentails{1} \smaller\psi$, we get $\phi \lor \psi \aentails{1} \smaller\phi \lor \smaller\psi$. By (A15a), we have $(\smaller\phi \aentails{1} \smaller\psi) \lor (\smaller\psi \aentails{1} \smaller\phi)$. Now we reason by cases. From $\smaller\phi \aentails{1} \smaller\psi$, by $(A12a)$ we conclude   $\phi \lor \psi \aentails{1} \smaller\psi$ and by (A17a) and (A13a) $\smaller{(\phi \lor \psi)} \aentails{1} \smaller\psi$ and hence $\smaller{(\phi \lor \psi)} \aentails{1} \smaller\phi \lor \smaller\psi$. Similarly, from $\smaller\psi \aentails{1} \smaller\phi$ we conclude that $\smaller{(\phi \lor \psi)} \aentails{1} \smaller\phi \lor \smaller\psi$ as well. Therefore, the first part of (\ref{fml:disjunction-rule}) follows.

Since $\varphi \to \varphi \lor \psi$ is a tautology of CPL, we derive $\varphi  \aentails{1} \varphi \lor \psi$ by (A1), and by (A17a) it follows $\smaller\varphi  \aentails{1} \smaller(\varphi \lor \psi)$. We argue similarly to derive $\smaller\psi  \aentails{1} \smaller(\varphi \lor \psi)$ and thus by propositional reasoning we have $\smaller \varphi \lor \smaller\psi  \aentails{1} \smaller(\varphi \lor \psi)$. Therefore, also the second part of (\ref{fml:disjunction-rule}) holds.
\end{proof}

\begin{theorem} \label{thm:completeness-LAEC}
Let $\mathcal T$ be a theory and $\Phi$ be a formula of \LAEC. Then ${\mathcal T} \provesLAEC \Phi$ if and only if ${\mathcal T} \modelsLAEC \Phi$.
\end{theorem}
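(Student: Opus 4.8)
The plan is to follow the template of the completeness proof for \LAE{} (Theorem~\ref{thm:completeness-LAE}), enriching the canonical model by a total order. Soundness (the ``only if'' direction) is routine: each new axiom corresponds directly to a clause of Lemma~\ref{lem:diamond-on-similarity-space} or to Lemma~\ref{lem:intersection-of-intervals}. Concretely, (A12a)--(A14a) reflect parts (i)--(iii), (A15a) reflects (iv), (A17a) reflects (v), (A18a) reflects (vii), and (A16) reflects (vi); the ``b''-variants are their mirror images, (A19) is sound by Lemma~\ref{lem:intersection-of-intervals}, and (A1') is an instance schema of \CPL{} tautologies. So I would dispatch soundness in a single paragraph by pointing at these lemmas.

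For completeness, assume $\mathcal T \doesnotprove \Phi$ and extend $\mathcal T$ to a complete theory. I would reuse the \LAE{} construction: set $\phi \stronger \psi$ iff $\mathcal T \proves \phi \aentails{1} \psi$, let $\equallystrong$ be its symmetrisation, and form the finite Boolean algebra $\mathcal L = \{\equallystrongcl{\phi}\}$. The first point to check, new here because $\mathcal B$ now contains diamond expressions, is that the atoms of $\mathcal L$ are still exactly the consistent m.e.c.'s. This follows from (A8) together with (A1): for a m.e.c.\ $\epsilon$ and any basic expression $\chi$, applying (A8) to $\epsilon \aentails{1} \chi \lor \lnot\chi$ gives the dichotomy that $\epsilon \stronger \chi$ or $\epsilon \land \chi \aentails{1} \false$, so each consistent m.e.c.\ is an atom; since the m.e.c.'s join to $\true$, these are all the atoms. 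Consequently the set of worlds $W$, the evaluation $e$ of (\ref{fml:evaluation}), the similarity $S$ of (\ref{fml:derived-similarity}), and the equivalence (\ref{fml:LAE-proof}), namely $\mathcal T \proves \phi \aentails{c} \psi$ iff $e(\phi) \subseteq U_c(e(\psi))$, all carry over unchanged, since their proofs merely resolve basic expressions into joins of m.e.c.'s.

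The genuinely new work is to equip $W$ with a total order and to verify the diamonds are interpreted correctly. I would define, for consistent m.e.c.'s, $\equallystrongcl{\delta} \below \equallystrongcl{\epsilon}$ iff $\delta \stronger \smaller\epsilon$. Reflexivity is (A12a), transitivity follows from (A17a) and (A13a), and totality from (A15a) combined with (A12a). For antisymmetry I would first prove the ``mirror'' lemma that $\delta \stronger \smaller\epsilon$ iff $\epsilon \stronger \larger\delta$: using the m.e.c.\ dichotomy, if $\epsilon \not\stronger \larger\delta$ then $\epsilon \land \larger\delta \aentails{1} \false$, whence (A18b) yields $\smaller\epsilon \land \larger\delta \aentails{1} \false$, contradicting $\delta \stronger \smaller\epsilon$ and $\delta \stronger \larger\delta$ (by (A12b)); the converse uses (A18a). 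Antisymmetry then follows because $\delta \below \epsilon$ and $\epsilon \below \delta$ give $\epsilon \stronger \smaller\delta \land \larger\delta \equallystrong \delta$ by (A16), and two atoms with $\epsilon \stronger \delta$ coincide. Next, to see that $(W, S, \below)$ is a \emph{totally ordered} similarity space in the sense of Definition~\ref{def:totally-ordered-similarity-space}, I must check compatibility of $S$ with $\below$; this is where (A19) does the main job. Given $\delta \below \epsilon \below \zeta$ and $\mathcal T \proves \delta \aentails{c} \zeta$, transitivity (A9) against $\zeta \stronger \larger\epsilon$ yields $\delta \aentails{c} \larger\epsilon$, while $\delta \stronger \smaller\epsilon$ gives $\delta \aentails{c} \smaller\epsilon$; since $\smaller\epsilon \land \larger\epsilon \equallystrong \epsilon$ is consistent, (A19) delivers $\delta \aentails{c} \epsilon$, i.e.\ $S(\equallystrongcl{\delta},\equallystrongcl{\epsilon}) \geq S(\equallystrongcl{\delta},\equallystrongcl{\zeta})$, and the mirror argument gives $S(\equallystrongcl{\epsilon},\equallystrongcl{\zeta}) \geq S(\equallystrongcl{\delta},\equallystrongcl{\zeta})$.

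Finally I would verify that $e(\smaller\phi) = \smaller e(\phi)$ and, dually, $e(\larger\phi) = \larger e(\phi)$, so that $e$ is a legitimate \LAEC{} evaluation. Writing $\phi \equallystrong \bigvee_{\epsilon \stronger \phi} \epsilon$ as a join of m.e.c.'s, Lemma~\ref{lem:LAEC} gives $\smaller\phi \equallystrong \bigvee_{\epsilon \stronger \phi} \smaller\epsilon$; then (A8) and (A17a) show $\delta \stronger \smaller\phi$ iff there is a m.e.c.\ $\epsilon \stronger \phi$ with $\delta \below \epsilon$, which is precisely $\equallystrongcl{\delta} \in \smaller e(\phi)$. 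With all conditions in place, $e$ satisfies $\mathcal T$ but not $\Phi$, so $\mathcal T \modelsLAEC \Phi$ fails. The main obstacle, in my view, is not any single computation but the coherent interaction of the order axioms: establishing the mirror lemma (A18a/b with A16) so that the one relation $\below$ is simultaneously the $\smaller$-order and the $\larger$-order, and then exploiting (A19) to force the betweenness/compatibility condition (ii) of Definition~\ref{def:totally-ordered-similarity-space}. Everything else is a faithful transcription of the \LAE{} argument.
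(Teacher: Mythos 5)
Your proof is correct and follows essentially the same route as the paper's: the same canonical Lindenbaum-style construction, with $W$ the classes of consistent m.e.c.'s, $S$ defined by (\ref{fml:derived-similarity}), the order $\below$ obtained from $\smaller$-comparison of atoms, compatibility of $S$ with $\below$ forced by (A19) exactly as in the paper, and preservation of the diamonds by $e$ via Lemma~\ref{lem:LAEC}. The only local deviations are that you derive atomicity of $\mathcal L$ from the (A8)/(A1') dichotomy and the order-reversal (``mirror'') property from (A18a)/(A18b), whereas the paper argues these points through a case analysis on (A15) combined with (A16); both variants are sound.
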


\begin{proof}
As regards the ``only if'' part, the soundness of (A1)--(A10) follows from Theorem \ref{thm:completeness-LAE}. The soundness of (A12)--(A18) follows from Lemma \ref{lem:diamond-on-similarity-space}. The soundness of (A19) holds by Lemma \ref{lem:intersection-of-intervals}. 

To see the ``if'' part, assume that $\mathcal T$ does not prove $\Phi$. Extending $\mathcal T$ if necessary, we can assume that the theory $\mathcal T$ is complete.

We write $\phi \stronger \psi$ for ${\mathcal T} \proves \phi \aentails{1} \psi$, we let $\equallystrong$ be the symmetrisation of $\stronger$, and we proceed like in the proof of Theorem \ref{thm:completeness-LAE} to construct the Boolean algebra $\mathcal L$ of $\equallystrong$-classes. Furthermore, by (A17) also $\smaller$ and $\larger$ are compatible with $\equallystrong$; hence we can define $\smaller\equallystrongcl{\phi} = \equallystrongcl{\smaller\phi}$ and $\larger\equallystrongcl{\phi} = \equallystrongcl{\larger\phi}$ for any $\equallystrongcl{\phi} \in {\mathcal L}$.

Let $W$ again be the set consisting of the classes $\equallystrongcl{\epsilon}$, where $\epsilon$ is a m.e.c.\ such that ${\mathcal T} \doesnotprove \epsilon \aentails{1} \false$. Then the infimum of two distinct elements of $W$ is $\equallystrongcl{\false}$ and the supremum of all elements of $W$ is $\equallystrongcl{\true}$.

We claim that, for any m.e.c.\ $\equallystrongcl{\epsilon} \in W$, $\equallystrongcl{\smaller\epsilon}$ is a supremum of elements of $W$. Indeed, let $\delta$ be a further m.e.c.; the claim will follow from the fact that either $\delta \stronger \smaller\epsilon$ or $\delta \land \smaller\epsilon \equallystrong \false$. If $\delta$ and $\epsilon$ coincide, the first case applies because of (A12). Let $\delta$ and $\epsilon$ be distinct. By (A15), one of the following two possibilities applies:

{\it Case 1.} $\larger\delta \stronger \larger\epsilon$. Then $\delta \land \smaller\epsilon \equallystrong \delta \land \larger\delta \land \smaller\epsilon \stronger \delta \land \larger\epsilon \land \smaller\epsilon \equallystrong \delta \land \epsilon \equallystrong \false$ by (A12) and (A16).

{\it Case 2.} $\larger\epsilon \stronger \larger\delta$. In this case, if we furthermore have that $\smaller\epsilon \stronger \smaller\delta$, it follows $\epsilon \equallystrong \smaller\epsilon \land \larger\epsilon \stronger \smaller\delta \land \larger\delta \equallystrong \delta$ by (A16), in contradiction to our assumption that $\delta$ and $\epsilon$ are distinct. Hence, by (A15), we have $\smaller\delta \stronger \smaller\epsilon$ and thus, by (A12), $\delta \stronger \smaller\epsilon$.

By Lemma \ref{lem:LAEC}, it further follows that, for any basic expression $\phi$ in which $\smaller$ and $\larger$ does not occur, $\equallystrongcl{\smaller\phi}$ is a supremum of elements of $W$. We may argue similarly to conclude that the same applies to $\equallystrongcl{\larger\phi}$. Hence $W$ is the set of atoms of $\mathcal L$.

We next define $e \colon {\mathcal B} \to {\mathcal P}(W)$ by (\ref{fml:evaluation}). Then $e$ preserves the Boolean operations and constants. Furthermore, we define $S \colon W^2 \to V$ as in (\ref{fml:derived-similarity}), that is, $S(\equallystrongcl{\delta}, \equallystrongcl{\epsilon}) = \max\{ c \in V \mid \mathcal{T} \vdash \delta  \aentails{c}  \epsilon \}$. We conclude as in the proof of Theorem \ref{thm:completeness-LAE} that $(W, S)$ is a finite similarity space such that (\ref{fml:LAE-proof}) holds.

For $\equallystrongcl{\delta}, \equallystrongcl{\epsilon} \in W$, let
\[ \equallystrongcl{\delta} \below \equallystrongcl{\epsilon} \quad\text{if}\quad
   \smaller\delta \stronger \smaller\epsilon. \]
Note that, by (A15) and (A16), $\equallystrongcl{\delta} \below \equallystrongcl{\epsilon}$ iff $\larger\epsilon \stronger \larger\delta$. We claim that $\below$ totally orders $W$. Indeed, reflexivity is clear; antisymmetry holds by (A16); and transitivity is evident as well. Finally, the linearity of $\below$ holds by (A15).

Let $\equallystrongcl{\delta}, \equallystrongcl{\epsilon}, \equallystrongcl{\zeta} \in W$ such that $\equallystrongcl{\delta} \below \equallystrongcl{\epsilon} \below \equallystrongcl{\zeta}$ and assume ${\mathcal T} \proves \delta \aentails{c} \zeta$. Then we have ${\mathcal T} \proves \larger\epsilon \aentails{1} \larger\delta$ and, by (A17), ${\mathcal T} \proves \larger\delta \aentails{c} \larger\zeta$; hence ${\mathcal T} \proves \larger\epsilon \aentails{c} \larger\zeta$ as well. We also have that ${\mathcal T}$ proves $\smaller\epsilon \aentails{1} \smaller\zeta$, and hence $\smaller\epsilon \land \larger\epsilon \aentails{c} \larger\zeta$ as well as $\smaller\epsilon \land \larger\epsilon \aentails{c} \smaller\zeta$. Furthermore, by (A16), $\smaller\zeta \land \larger\zeta \equallystrong \zeta$ and by assumption ${\mathcal T} \doesnotprove \zeta \aentails{1} \false$; hence, by the completeness of $\mathcal T$, ${\mathcal T} \proves \lnot(\smaller\zeta \land \larger\zeta \aentails{1} \false)$. We conclude by (A19) that ${\mathcal T} \proves \smaller\epsilon \land \larger\epsilon \aentails{c} \smaller\zeta \land \larger\zeta$ and, by (A16), ${\mathcal T} \proves \epsilon \aentails{c} \zeta$. It follows that $S(\equallystrongcl{\epsilon}, \equallystrongcl{\zeta}) \geq S(\equallystrongcl{\delta}, \equallystrongcl{\zeta})$.

Similarly, we proceed to derive also $S(\equallystrongcl{\delta}, \equallystrongcl{\epsilon}) \geq S(\equallystrongcl{\delta}, \equallystrongcl{\zeta})$. Thus, we have shown that $(W, S, \below)$ is a totally ordered similarity space.

It remains to show that $e$ preserves the modal operations; it will then follow that $e$ is an evaluation for \LAEC. For $\equallystrongcl{\delta}, \equallystrongcl{\epsilon} \in W$, we have $\delta \stronger \smaller\epsilon$ if and only if $\smaller\delta \stronger \smaller\epsilon$ if and only if $\equallystrongcl{\delta} \below \equallystrongcl{\epsilon}$. That is, $\smaller\epsilon$ is the supremum of all $\delta$ such that $\equallystrongcl{\delta} \below \equallystrongcl{\epsilon}$. It follows that, for any $\equallystrongcl{\phi} \in {\mathcal L}$,
\begin{align*}
\smaller\equallystrongcl{\phi} & \;=\; \equallystrongcl{\smaller\phi} \\
& \;=\; \bigvee \{ \equallystrongcl{\smaller\epsilon} \colon 
        \equallystrongcl{\epsilon} \in W \text{ such that } \epsilon \stronger \phi \} \\
& \;=\; \bigvee \{ \equallystrongcl{\delta} \in W \colon
         \equallystrongcl{\delta} \below \equallystrongcl{\epsilon}
         \text{ for some $\equallystrongcl{\epsilon} \in W$ such that $\epsilon \stronger \phi$} \},
\end{align*}
and a similar statement holds for $\larger\phi$.

We summarise that $e$ is an evaluation, which, by (\ref{fml:LAE-proof}), satisfies the elements of $\mathcal T$ but not $\Phi$.
\end{proof}

\begin{remark}
We have enlarged the logic \LAE{} by a pair of two modal operators, $\smaller$ and $\larger$. We may say that they are order-theoretic duals of each other. We note that we could build a logic like \LAEC{} also on the basis of one of these operations, say $\larger$, alone. Indeed, formulas of the form $\larger\alpha \land \lnot\larger\beta$ are suitable to represent the intervals of the totally ordered set of worlds.

This approach, however, would differ from the present one to a larger extent than one might expect. Eliminating $\smaller$ would be easy if we could express $\smaller\alpha$ by a formula containing $\larger$ only; there seems to be no straightforward way of doing so.
\end{remark}

\begin{remark}
The modal logic {\rm S4.3} is the logic of totally preordered frames; see, e.g., {\rm \cite{book-modal}}. We have already mentioned its close relationship to \LAEC. We can actually say that \LAEC{} is stronger than {\rm S4.3}. Indeed, let us identify the basic expression of \LAEC{} not containing $\larger$ (or alternatively, not containing $\smaller$), with the {\rm S4.3} formulas. We claim that, for each {\rm S4.3} tautology $\alpha$, $\;\true \aentails{1} \alpha$ is provable in \LAEC.

Clearly, for any tautology $\alpha$ of \CPL, \LAEC{} proves $\true \aentails{1} \alpha$ by {\rm (A1')}. Furthermore, in \LAEC, $\true \aentails{1} \alpha \impl \beta$ and $\alpha \aentails{1} \beta$ are mutually derivable; hence from $\true \aentails{1} \alpha$ and $\true \aentails{1} \alpha \impl \beta$, \LAEC{} proves $\true \aentails{1} \beta$ by {\rm (A9)}.

Moreover, {\rm S4.3} can be axiomatised by the modal axioms $K$, $T$, $4$, and $H$ as well as the rule of necessitation. The claim follows for axiom $K$ from Lemma \ref{lem:LAEC} and for the axioms $T$ and $4$ from {\rm (A12)} and {\rm (A13)}, respectively. Furthermore, assume that \LAEC{} proves $\true \aentails{1} \alpha$. It then follows $\lnot\alpha \aentails{1} \false$, hence, by {\rm (A17)}, $\smaller\lnot\alpha \aentails{1} \smaller\false$, and, by {\rm (A14)}, $\smaller\lnot\alpha \aentails{1} \false$. We conclude $\true \aentails{1} \lnot\smaller\lnot\alpha$, that is, \LAEC{} emulates the rule of necessitation.

It remains to consider the axiom $H$, which we can express using the $\smaller$ operator in the following way:
\[ \lnot\smaller(\phi \land \lnot\smaller\psi) \lor \lnot\smaller(\psi \land \lnot\smaller\phi). \]
There is the following corresponding \LAEC{} proof. Assume $\smaller\phi \aentails{1} \smaller\psi$. Then $\phi \aentails{1} \smaller\psi$ by {\rm (A12)} and  $\phi \land \lnot\smaller\psi \aentails{1} \false$ by {\rm (A2)}. We conclude $\smaller(\phi \land \lnot\smaller\psi) \aentails{1} \false$ by {\rm (A17)} and {\rm (A14)} and hence $\true \aentails{1} \lnot\smaller(\phi \land \lnot\smaller\psi)$. Thus \LAEC{} proves $(\smaller\phi \aentails{1} \smaller\psi) \impl (\true \aentails{1} \lnot\smaller(\phi \land \lnot\smaller\psi))$. Similarly, $(\smaller\psi \aentails{1} \smaller\phi) \impl (\true \aentails{1} \lnot\smaller(\psi \land \lnot\smaller\phi))$ and thus, by {\rm (A15)}, $(\true \aentails{1} \lnot\smaller(\phi \land \lnot\smaller\psi)) \lor  (\true \aentails{1} \lnot\smaller(\psi \land \lnot\smaller\phi))$. Hence we finally have $\true \aentails{1} \lnot\smaller(\phi \land \lnot\smaller\psi) \lor \lnot\smaller(\psi \land \lnot\smaller\phi))$, as desired.
\end{remark}

\section{Approximate Entailment on Products of Chains}
\label{sec:LAEPC}

The logic \LAEC, which we have presented in the previous section, is designed for the case that the universe of discourse is endowed with a total order and that this order is, in a natural sense, compatible with the similarity relation. \LAEC{} is meant as a preparation of what we actually have in mind. In the present section, we consider the case that is more likely to occur in practice; we assume that the universe of discourse arises from the distinction with respect to several parameters, each of which refers to a totally ordered structure. That is, we will assume that the set of worlds is, not a chain but, a product of chains. We will use variables that refer to only one of these total orders; accordingly, we will deal with variables of different sorts.

We define the Logic for Approximate Entailment on Products of Chains, or \LAEPC{} for short, as follows. This time, our set of variables will be partitioned as follows. We fix a number $M \geq 1$ and for each $i = 1, \ldots, M$, we fix a finite number of variables $\phi_{i1}, \ldots, \phi_{iN_i}$. We will say that the variable $\phi_{ij}$ belongs to the {\it sort $i$}. Moreover, the truth constants $\false$ and $\true$ are considered to belong to every sort. Finally, we use an additional finite set $\psi_1, \ldots, \psi_N$ of variables that are not bound to be of a particular sort. We call the former variables {\it sorted} and the latter {\it unsorted}.\footnote{Actually, we will use the unsorted variables as a sort of syntactic sugar, in the sense that they will not play an active role in determining the possible worlds, but will be governed by the (truth-value of the) sorted ones. This will be reflected by axiom $(\textrm{A22})$ below.}

Basic expressions of \LAEPC{} as well as graded implications and formulas are defined as in case of \LAEC. We will keep using $\mathcal B$ to denote the set of basic expressions of \LAEPC. A basic expression $\phi$ all of whose variables belong to the sort $i$ will itself be said to belong to the sort $i$. We also say in this case that $\phi$ is one-sorted.

By a m.e.c. we mean now a conjunction of literals in which all sorted variables occur exactly once. Moreover, by a one-sorted m.e.c., we shall mean a conjunction of literals in which all the variables of a sort $i$ occur exactly once. Finally, we say that two basic expressions $\phi$ and $\psi$ are {\it disjoint-sorted} if all variables occurring in $\phi$ and $\psi$ are sorted and no variable occurring in $\phi$ is of the same sort as a variable occurring in $\psi$.

The models for the new logic are defined in the following way.

\begin{definition} \label{def:product-similarity-space}
For each $i = 1, \ldots, M$, let $(W_i, S_i, \worldleq_i)$ be a totally ordered similarity space based on $(V, \tand)$. Let $W = \prod_i W_i$. For each $v = (v_1, \ldots, v_M)$ and $w = (w_1, \ldots, w_M)$ in $W$, let
\begin{equation} \label{fml:total-similarity}
S(v, w) \;=\;
\min_{1 \leq i \leq M} \; S_i(v_i, w_i).
\end{equation}
Furthermore, for each $i$, we define the preorder $\below_i$ on $W$ as follows:
\begin{equation} \label{fml:preorders}
(v_1, \ldots, v_M) \below_i (w_1, \ldots, w_M) \quad\text{if}\quad v_i  \worldleq_i  w_i.
\end{equation}
Then $(W, S, (\below_i)_{i = 1, \ldots, M})$ is called a {\it component-wise ordered similarity space}.
\end{definition}

It is easily verified that, if $(W, S, (\below_i)_{i = 1, \ldots, M})$ is a component-wise ordered similarity space, then $(W,S)$ is a finite similarity space.

We will interpret the modal operations of \LAEC{} in the following way. Let the compo\-nent-wise ordered similarity space $(W, S, (\below_i)_{i = 1, \ldots, M})$ be given. For a subset $A \subseteq W$, we define
\begin{equation} \label{fml:diamond-on-similarity-space-multidimensional}
\begin{split}
\smaller A \;=\; & \{ w \in W \colon
 \text{for each $i$, there is a $v \in A$ such that $w \below_i v$} \}, \\
\larger A \;=\; & \{ w \in W \colon
 \text{for each $i$, there is a $v \in A$ such that $w \above_i v$} \}.
\end{split}
\end{equation}
Let $\larger_i$ and $\smaller_i$, for each $i = 1, \ldots, M$, be the modal operations for the totally ordered similarity space $(W_i, S_i,  \worldleq_i)$, defined according to (\ref{fml:diamond-on-similarity-space}). Then the modal operations on $W$, $\smaller$ and $\larger$, are determined by the operations $\larger_i$ and $\smaller_i$ in the following sense. Let us write, for some $C \subseteq W_i$,
\[ \pi C \;=\; \{ (w_1, \ldots, w_M) \in W \colon w_i \in C \}. \]
That is, $\pi C$ may be viewed as a cylindrical extension of $C$ into $W$. $\smaller$ applied to a set of this form is determined by $\smaller_i$. Indeed, for $C \subseteq W_i$, we have
\[ \smaller{\pi C} \;=\; \pi (\smaller_i C). \]
Furthermore, let $A$ be an arbitrary non-empty subset of $W$. Then
\begin{equation} \label{fml:smaller-1}
\smaller A \;=\; \bigcap_i \{ \pi(\smaller_i \{w_i\}) \colon w_i \in W_i \text{ such that } A \subseteq \pi(\smaller_i \{w_i\}) \}.
\end{equation}
To improve the expression on the right side, for each $i$, let $x_i$ be w.r.t.\ $\worldleq_i$ the largest element of $W_i$ such that $\pi \{x_i\}$ has a non-empty intersection with $A$. Then $\smaller A \subseteq \pi \smaller{\{x_i\}}$ and in fact we have
\begin{equation} \label{fml:smaller-2}
\smaller A \;=\; \bigcap_i \pi (\smaller_i{\{x_i\}}).
\end{equation}
Similar statements hold for $\larger$. An illustration can be found in Fig.\ \ref{fig:Modal-operator}.

\begin{figure}[h!]
\centering

\includegraphics[width=0.6\textwidth]{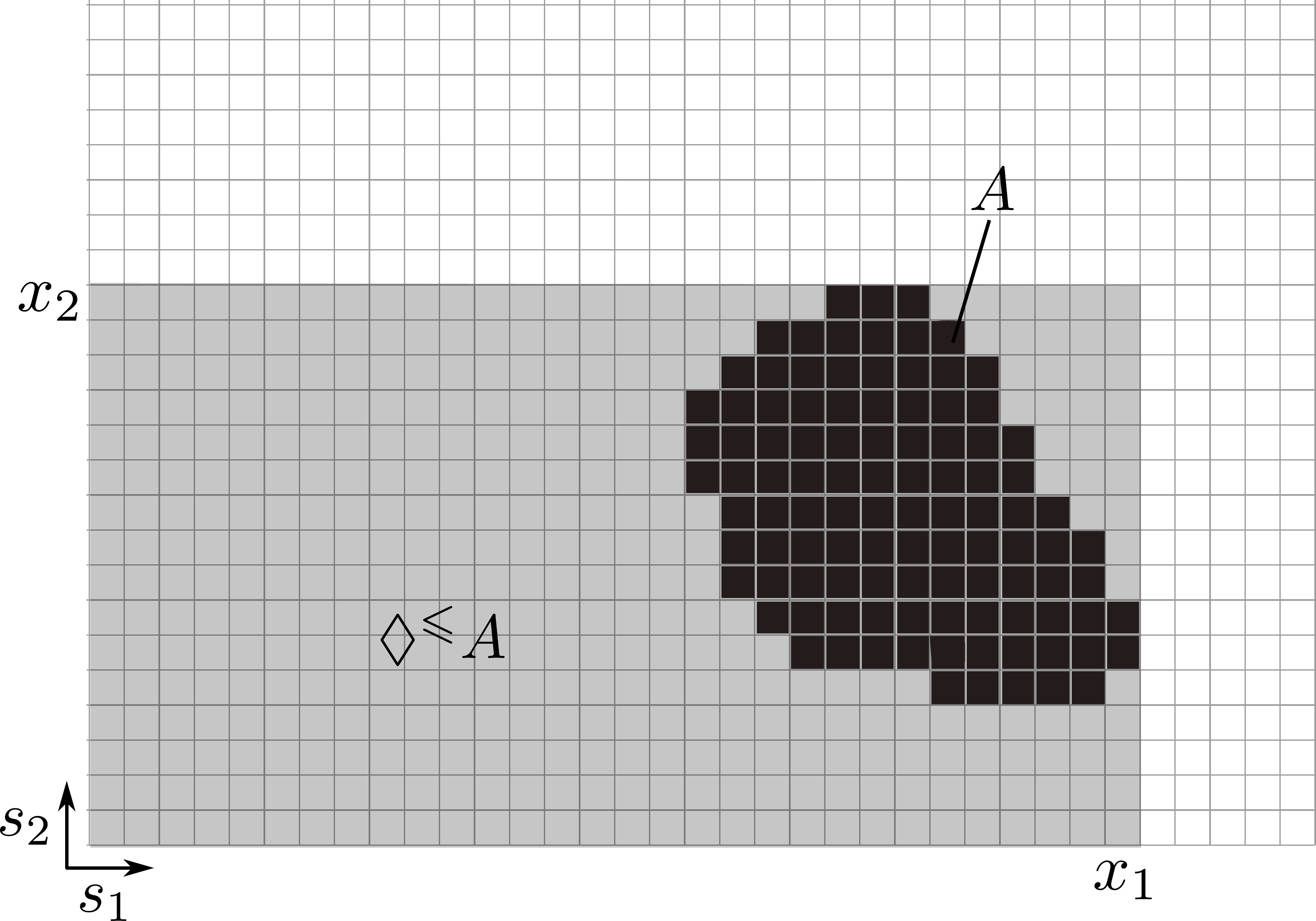}

\caption{Illustration of the meaning of the modal operator $\Diamond^\leqslant$.}
\label{fig:Modal-operator}
\end{figure}

Based on Definition \ref{def:product-similarity-space}, let us now define the notion of satisfaction for \LAEPC. 

\begin{definition} \label{def:semantics-LAEPC}
Let $(W, S, (\below_i)_{i = 1, \ldots, M})$ be a component-wise ordered similarity space. An evaluation for \LAEPC\ in $(W, S, (\below_i)_{i = 1, \ldots, M})$ is a mapping $e \colon {\mathcal B} \to {\mathcal P}(W)$, subject to the same conditions as in case of \LAE{} and to the following two additional  conditions: 
\begin{itemize}
\item[(i)] for any variable $\phi$ of sort $i$, $e(\phi) = \pi A$ for some $A \subseteq W_i$;
\item[(ii)] for any unsorted variable $\alpha$, $e(\alpha)$ is a union of intersections of sets of the form $e(\phi)$ or $W \setminus e(\phi)$, where $\phi$ is a variable of any sort;
\item[(iii)] for an arbitrary basic expression $\phi$, we require
\begin{align*}
e(\smaller\phi) \;=\; \smaller{e(\phi)}, \\
e(\larger\phi) \;=\; \larger{e(\phi)}.
\end{align*}
\end{itemize}
Moreover, the notions of {\it satisfaction} and {\it semantic entailment} are defined analogously to the case of \LAEC. We write $(W, S, (\below_i)_{i = 1, \ldots, M}, e) \models \Phi$ to denote that an evaluation $e$ in $(W, S, (\below_i)_{i = 1, \ldots, M})$ satisfies an \LAEPC\ formula $\Phi$, and $\mathcal{T} \modelsLAEPC \Phi$ to denote that a theory $\cal T$ semantically entails $\Phi$ in \LAEPC.
\end{definition}

We see that Definition \ref{def:semantics-LAEPC} generalises Definition \ref{def:semantics-LAEC}: when restricting the variables to a single sort $i$, the semantic entailment in \LAEPC{} resembles the case of \LAEC. Note furthermore the effect of part (ii) of Definition \ref{def:semantics-LAEPC}. An unsorted variable $\alpha$ is, by definition, not bound to a particular sort. It is, however, required that $\alpha$ is interpreted in the same way as some Boolean combination of sorted variables. Thus $\alpha$ is interpreted like a disjunction of m.e.c.s, that is, by a disjunction of conjunctions of sorted variables or their negation.

Again, we compile some basic properties of the operators (\ref{fml:diamond-on-similarity-space-multidimensional}). With reference to the notation of (\ref{fml:diamond-on-similarity-space-multidimensional}), we will write for any $A \subseteq W_{i_1} \times \ldots \times W_{i_k}$, where $1 \leq i_1 < \ldots < i_k \leq M$,
\[ \pi A \;=\; \{ (w_1, \ldots, w_M) \in W \colon (w_{i_1}, \ldots, w_{i_k}) \in A \}. \]

\begin{lemma} \label{lem:diamond-on-similarity-space-multidimensional}
Let $(W, S, (\below_i)_{i = 1, \ldots, M})$ be a component-wise ordered similarity space. Then, for any $A, B \subseteq W$ and $c \in V$, the following holds:
\EnumeratedList
\Number{i} $A \subseteq \smaller A$.
\Number{ii} $\smaller{\smaller A} = \smaller A$.
\Number{iii} $\smaller\emptyset = \emptyset$.
\Number{iv} If $A \subseteq U_c(B)$, then $\smaller A \subseteq U_c \smaller B$.

In particular, $A \subseteq B$ implies $\smaller A \subseteq \smaller B$.
\EndofEnumeratedList
In addition, each statement {\rm (i)}--{\rm (iv)} still holds when we replace all symbols ``$\smaller$'' by ``$\larger$''.

Moreover, let $1 \leq i_1 < \ldots < i_k \leq M$ and $1 \leq j_1 < \ldots < j_l \leq M$ such that $\{i_1, \ldots, i_k\}$ and $\{j_1, \ldots, j_l\}$ are disjoint. Then we have:
\EnumeratedList
\Number{v} Let $A, C \subseteq W_{i_1} \times \ldots \times W_{i_k}$ and $B, D \subseteq W_{j_1} \times \ldots \times W_{j_l}$. Assume that $A, B, C, D$ are non-empty. Then $\pi A \subseteq U_c(\pi C)$ and $\pi B \subseteq U_c(\pi D)$ if and only if $\pi(A \times B) \subseteq U_c(\pi(C \times D))$.
\Number{vi} Let $A \subseteq W$ and let $B \subseteq W_{i_1} \times \ldots \times W_{i_k}$ and $C \subseteq W_{j_1} \times \ldots \times W_{j_l}$. Then $\smaller A \cap \pi B \cap \pi C = \emptyset$ if and only if $\smaller A \cap \pi B = \emptyset$ or $\smaller A \cap \pi C = \emptyset$. A similar statement holds for ``$\larger$'' replacing ``$\smaller$''.
\EndofEnumeratedList
Finally, we have:
\EnumeratedList
\Number{vii} Let $A \subseteq W_i$ for some $i$ and let $B \subseteq W$. If $\pi A \cap \smaller B = \emptyset$, then $\larger(\pi A) \cap \smaller B = \emptyset$. Similarly, if $\pi A \cap \larger B = \emptyset$, then $\smaller(\pi A) \cap \larger B = \emptyset$.
\EndofEnumeratedList
\end{lemma}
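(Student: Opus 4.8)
The plan is to exploit throughout the ``box'' description of the modal operators recorded in (\ref{fml:smaller-2}): for non-empty $A$ one has $\smaller A = \bigcap_m \pi(\smaller_m\{x_m\})$, where $x_m$ is the largest element occurring as an $m$-th coordinate of points in $A$, and dually $\larger A = \bigcap_m \pi(\larger_m\{y_m\})$ with $y_m$ the smallest such coordinate; equivalently $\smaller A$ is the product of the principal down-sets $\smaller_m\{x_m\}$ in the components, and $\larger A$ the product of the principal up-sets, while $\smaller\emptyset = \larger\emptyset = \emptyset$. Parts (i)--(iv) then reduce to the one-dimensional Lemma \ref{lem:diamond-on-similarity-space}. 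Indeed, (i) is reflexivity of each $\worldleq_m$, (iii) is immediate, and (ii) holds because the coordinate-wise maxima of the box $\smaller A$ are again the $x_m$, so a second application of $\smaller$ changes nothing. For (iv), from $A \subseteq U_c(B)$ and $S = \min_m S_m$ it follows that, for each $m$, the set of $m$-th coordinates of $A$ lies in the $c$-neighbourhood \emph{within $W_m$} of the set of $m$-th coordinates of $B$; applying Lemma \ref{lem:diamond-on-similarity-space}(v) in each $W_m$ gives the down-set inclusion $\smaller_m\{x_m\} \subseteq U_c(\smaller_m\{x'_m\})$, where $x'_m$ is the corresponding maximum for $B$, and I would assemble from these a partner $u \in \smaller B$ of any given $w \in \smaller A$ coordinate by coordinate. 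Since $S(w,u) = \min_m S_m(w_m,u_m)$, independent choices in each component suffice. These steps are routine.

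The genuinely multi-dimensional content is in (v), (vi) and (vii), where the disjointness of the index sets is essential. For (v) the key is two elementary computations of the neighbourhood operator. Writing $I=\{i_1,\dots,i_k\}$ and $J=\{j_1,\dots,j_l\}$, I would first show that for $C \subseteq W_{i_1}\times\cdots\times W_{i_k}$ one has $U_c(\pi C) = \pi(U_c C)$, where on the right $U_c$ is taken in the subproduct: the coordinates outside $I$ are free in $\pi C$ and can be matched exactly, contributing similarity $1$, so the minimum defining $S$ collapses to the minimum over $I$. Secondly, since $S$ is a minimum over disjoint coordinate blocks, $U_c(C \times D) = U_c C \times U_c D$. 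Combining these, $\pi(A\times B) \subseteq U_c(\pi(C\times D))$ is, after stripping the injective and monotone cylindrification $\pi$, equivalent to $A \times B \subseteq U_c C \times U_c D$, which for non-empty $A,B$ holds if and only if $A \subseteq U_c C$ and $B \subseteq U_c D$, i.e.\ $\pi A \subseteq U_c(\pi C)$ and $\pi B \subseteq U_c(\pi D)$. The non-emptiness hypotheses enter precisely in the elementary fact that $A\times B \subseteq X\times Y$ forces $A\subseteq X$ and $B\subseteq Y$.

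For (vi) the ``if'' direction is trivial, so the work is the converse. Assuming $\smaller A \cap \pi B \neq \emptyset$ and $\smaller A \cap \pi C \neq \emptyset$, the box form $\smaller A = \prod_m \smaller_m\{x_m\}$ yields some $b \in B$ with $b_i \worldleq_i x_i$ for all $i \in I$ and some $d \in C$ with $d_j \worldleq_j x_j$ for all $j \in J$. Because $I$ and $J$ are disjoint I can define a single point $w$ whose $I$-coordinates are $b$, whose $J$-coordinates are $d$, and whose remaining coordinates are set to $x_m$; then $w_m \worldleq_m x_m$ for every $m$, so $w \in \smaller A$, while $w \in \pi B \cap \pi C$ by construction, giving $\smaller A \cap \pi B \cap \pi C \neq \emptyset$. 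The dual statement for $\larger$ is identical with the lower bounds $y_m$. I expect this gluing argument to be the main obstacle, as it is exactly here that the box geometry together with the disjointness of the sorts is needed to realise the two witnesses simultaneously.

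Finally, (vii) reduces to a single component. Since $A \subseteq W_i$, the cylinder $\pi A$ and its image $\larger(\pi A) = \pi(\larger_i A)$ (by the identity noted in the text preceding (\ref{fml:smaller-1})) are constrained only in component $i$, while intersecting with $\smaller B$ imposes in that component exactly the condition $\smaller_i\{x_i\}$, the outer free coordinates being harmless. Thus $\pi A \cap \smaller B = \emptyset$ is equivalent to $A \cap \smaller_i\{x_i\} = \emptyset$ in $W_i$, and $\larger(\pi A) \cap \smaller B = \emptyset$ to $\larger_i A \cap \smaller_i\{x_i\} = \emptyset$; the implication between the two is then precisely Lemma \ref{lem:diamond-on-similarity-space}(vii) applied in $W_i$, and the dual follows symmetrically.
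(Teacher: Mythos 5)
Your proposal is correct and follows essentially the same route as the paper: everything rests on the box representation (\ref{fml:smaller-2}) of $\smaller$ and $\larger$, with (iv) and (vii) reduced componentwise to Lemma \ref{lem:diamond-on-similarity-space}, (v) obtained by unwinding $U_c$ over disjoint coordinate blocks, and (vi) by the same gluing of two witnesses along disjoint sorts. The only differences are cosmetic — you package (v) via the identities $U_c(\pi C)=\pi(U_c C)$ and $U_c(C\times D)=U_cC\times U_cD$ where the paper unwinds the definitions directly, and you spell out the coordinatewise assembly in (iv) that the paper leaves implicit.
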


\begin{proof}
(i) and (iii) follow easily from the definition (\ref{fml:diamond-on-similarity-space-multidimensional}).

(ii) follows from (\ref{fml:smaller-2}).

(iv) Let $A, B \subseteq W$ such that $A \subseteq U_c(B)$. We have to show that $\smaller A \subseteq U_c(\smaller B)$.

For each $i = 1, \ldots, M$, let $a_{ii} \in W_i$ be the largest element w.r.t.\ $\worldleq_i$ such that there is an $a_i = (a_{i1}, \ldots, a_{ii}, \ldots, a_{iM}) \in A$; cf.\ (\ref{fml:smaller-2}). Then $\smaller A = \{ (w_1, \ldots, w_M) \colon w_i \worldleq_i a_{ii} \text{ for all $i$} \}$. Furthermore, by assumption, there is, for each $i$, a $b_i = (b_{i1}, \ldots, b_{iM}) \in B$ such that $S(a_i, b_i) \geq c$. Then $(b_{11}, \ldots, b_{MM}) \in \smaller B$. Furthermore, because $S_i(a_{ii}, b_{ii}) \geq c$ for each $i$, we have $S((a_{11}, \ldots, a_{MM}), (b_{11}, \ldots, b_{MM})) \geq c$ and hence $(a_{11}, \ldots, a_{MM}) \in U_c(\smaller B)$. The claim follows.

(v) Note first that $\pi A \subseteq U_c(\pi C)$ holds if and only if, for any $(v_{i_1}, \ldots, v_{i_k}) \in A$ there is a $(w_{i_1}, \ldots, w_{i_k}) \in C$ such that $S_{i_p}(v_{i_p}, w_{i_p}) \geq c$ for all $p = 1, \ldots, k$.

We conclude that $\pi A \subseteq U_c(\pi C)$ and $\pi B \subseteq U_c(\pi D)$ if and only if, for any $(v_{i_1}, \ldots,$ $v_{i_k}) \in A$ and $(v_{j_1}, \ldots, u_{j_l}) \in B$, there is a $(w_{i_1}, \ldots, w_{i_k}) \in C$ and a $(w_{j_1}, \ldots, w_{k_l}) \in D$ such that  $S_{i_p}(v_{i_p}, w_{i_p}) \geq c$ for all $p = 1, \ldots, k$ and $S_{j_q}(v_{j_q}, w_{j_q}) \geq c$ for all $q = 1, \ldots, l$. Provided that the considered sets are all non-empty, the latter statement is obviously equivalent to $\pi(A \times B) \subseteq U_c(\pi(C \times D))$.

(vi) From $A \subseteq W$, let $e_1 \in W_1, \ldots, e_M \in W_M$ be defined like in (\ref{fml:smaller-2}). Then $\smaller A \cap \pi B \neq \emptyset$ holds if and only if there is a $(v_{i_1}, \ldots, v_{i_k}) \in B$ such that $e_{i_1} \worldleq_{i_1} v_{i_1}, \ldots, e_{i_k} \worldleq_{i_k} v_{i_k}$. Note furthermore that $\pi B \cap \pi C = \pi(B \times C)$.

We conclude that $\smaller A \cap \pi B \cap \pi C \neq \emptyset$ if and only if $\smaller A \cap \pi B \neq \emptyset$ and $\smaller A \cap \pi C \neq \emptyset$. This proves the indicated statement; the version with ``$\larger$'' replacing ``$\smaller$'' is seen similarly.

(vii) The assertion is trivial if $B = \emptyset$. Otherwise, $\pi A \cap \smaller B = \emptyset$ iff $a \in A$ implies that $a >_i b$ for all $b \in B$. The first part follows; the second one is seen analogously.
\end{proof}

Note that, as in the case of \LAEC, in the present case of \LAEPC\ we will have the axiom (A19) that will allow us to combine conclusions in a conjunctive way. Its soundness is due to the following lemma.

Here, by an {\it orthotope} we mean a Cartesian product of intervals. Note that, for any non-empty subset $A$ of $W$, $\smaller A \cap \larger A$ is the smallest orthotope containing $A$.

\begin{lemma} \label{lem:intersection-of-orthotopes}
Let $(W, S, (\below_i)_{i = 1, \ldots, M})$ be a component-wise ordered similarity space. Let $A, B \subseteq W$ be orthotopes with a non-empty intersection and let $c \in V$. Then
\[ U_c(A \cap B) \;=\; U_c(A) \cap U_c(B). \]
\end{lemma}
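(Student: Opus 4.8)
The plan is to exploit the product structure of orthotopes and reduce the claim to the one-dimensional Lemma~\ref{lem:intersection-of-intervals}. Write the two orthotopes as $A = A_1 \times \cdots \times A_M$ and $B = B_1 \times \cdots \times B_M$, where each $A_i$ and $B_i$ is an interval of the chain $W_i$. For each $i$, let $U_c^i$ denote the neighbourhood operator of the component similarity space $(W_i, S_i)$, defined exactly as $U_c$ but with $S_i$ in place of $S$.

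The key step is the factorization of the neighbourhood operator through the product: for any orthotope $A = A_1 \times \cdots \times A_M$,
\[ U_c(A) \;=\; U_c^1(A_1) \times \cdots \times U_c^M(A_M). \]
This is a direct consequence of the definition of $S$ as a minimum of the $S_i$. Indeed, a point $w = (w_1, \ldots, w_M)$ lies in $U_c(A)$ iff there is some $a = (a_1, \ldots, a_M) \in A$ with $S(w,a) = \min_i S_i(w_i, a_i) \geq c$, that is, with $S_i(w_i, a_i) \geq c$ for every $i$. Since $A$ is a Cartesian product, the coordinates $a_i \in A_i$ may be chosen independently; hence such an $a$ exists iff for each $i$ there is some $a_i \in A_i$ with $S_i(w_i, a_i) \geq c$, i.e.\ iff $w_i \in U_c^i(A_i)$ for every $i$.

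With this factorization in hand, the remainder is bookkeeping. Since $A \cap B = (A_1 \cap B_1) \times \cdots \times (A_M \cap B_M)$ is by hypothesis non-empty, each factor $A_i \cap B_i$ is non-empty; as $A_i$ and $B_i$ are intervals, Lemma~\ref{lem:intersection-of-intervals} applied inside $(W_i, S_i, \worldleq_i)$ yields $U_c^i(A_i \cap B_i) = U_c^i(A_i) \cap U_c^i(B_i)$. Applying the factorization to the orthotope $A \cap B$ and then this identity coordinatewise gives
\[ U_c(A \cap B) \;=\; \prod_i U_c^i(A_i \cap B_i) \;=\; \prod_i \bigl( U_c^i(A_i) \cap U_c^i(B_i) \bigr). \]
On the other hand, the factorization applied separately to $A$ and to $B$, together with the elementary identity $\prod_i X_i \cap \prod_i Y_i = \prod_i (X_i \cap Y_i)$ for products of sets, gives $U_c(A) \cap U_c(B) = \prod_i \bigl( U_c^i(A_i) \cap U_c^i(B_i) \bigr)$. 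Comparing the two displays establishes the equality.

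The only genuine content lies in the factorization $U_c(\prod_i A_i) = \prod_i U_c^i(A_i)$, and the sole essential use of the hypotheses is the appeal to the one-dimensional lemma, which needs each $A_i \cap B_i$ to be a non-empty interval --- which is precisely why the non-emptiness of $A \cap B$ is assumed. I anticipate no real obstacle: once the minimum-similarity causes the neighbourhood operator to split as a product, the multidimensional statement is nothing but the one-dimensional one repeated across coordinates.
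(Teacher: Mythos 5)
Your proof is correct and follows the same route as the paper, which simply states that the lemma ``is an easy consequence of Lemma~\ref{lem:intersection-of-intervals}''; your factorization $U_c(\prod_i A_i) = \prod_i U_c^i(A_i)$ via the minimum-based similarity is exactly the missing detail that makes that reduction work.
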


\begin{proof}
This is an easy consequence of Lemma \ref{lem:intersection-of-intervals}.
\end{proof}

\begin{figure}[h!]
\centering

\includegraphics[width=0.65\textwidth]{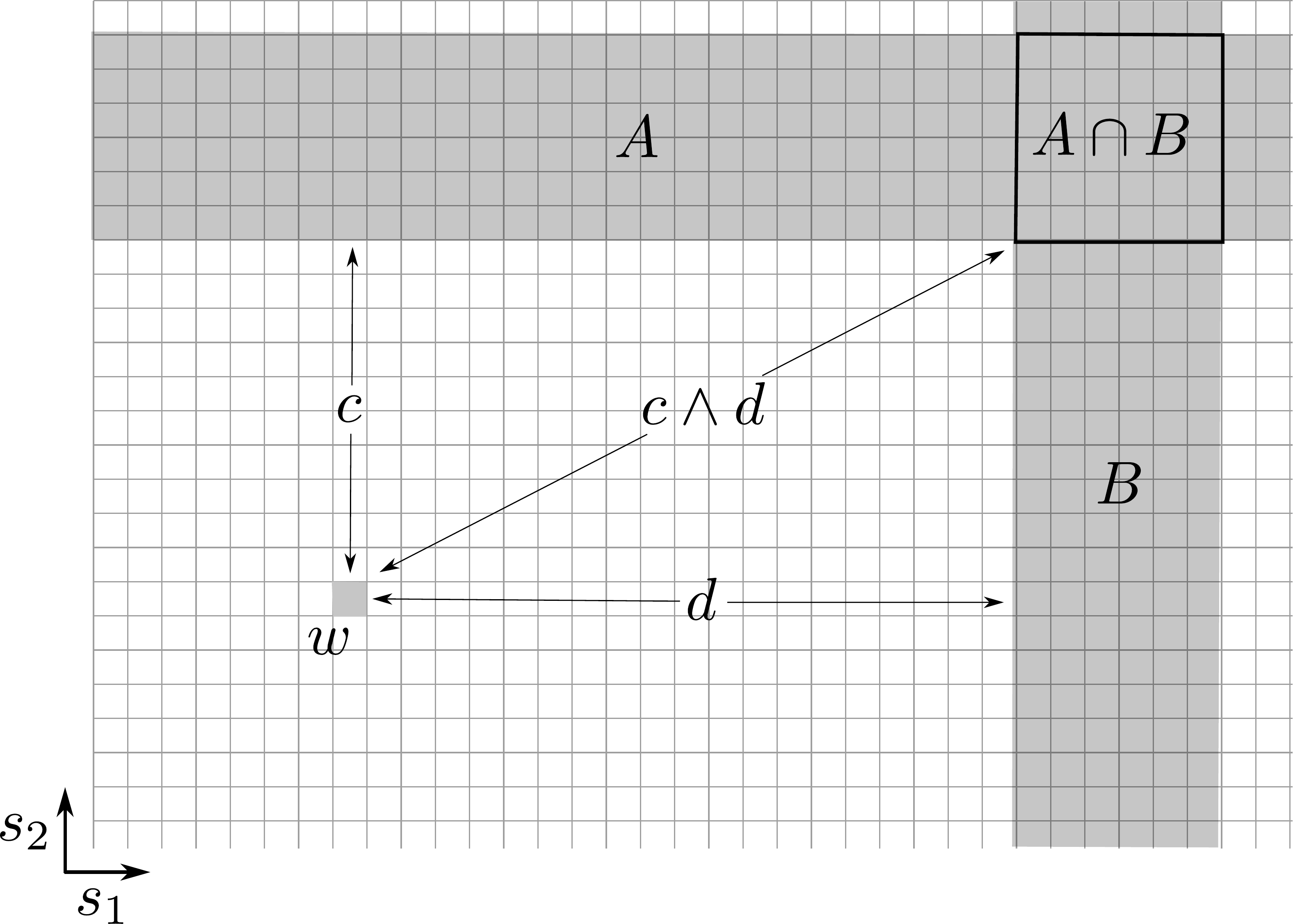}

\caption{Illustration of Lemma \ref{lem:intersection-of-orthotopes}.}
\label{fig:Distance-from-intersection}
\end{figure}

We proceed by proposing an axiomatisation of \LAEPC.

\begin{definition} \label{def:LAEPC-proofs}
The axioms of \LAEPC{} are  the following ones, for any basic expressions of \LAEPC{} $\phi, \psi, \chi \in {\mathcal B}$ and any $c, d \in V$:

axioms (A1') and (A2)--(A11);

axioms (A12)--(A14), (A17), (A19);

axiom (A15), where $\phi$ and $\psi$ belong to one coinciding sort;

axiom (A16), where $\epsilon$ is a one-sorted m.e.c.;

axiom (A18), where $\phi$ is one-sorted;

as well as
\BeginAxioms
\Axiom{A20} $\lnot(\phi \land \phi' \aentails{1} \false) \,\impl\, ((\phi \aentails{c} \psi) \land (\phi' \aentails{c} \psi') \eq (\phi \land \phi' \aentails{c} \psi \land \psi'))$, \\ where $\phi \land \psi$ and $\phi' \land \psi'$ are disjoint-sorted
\Axiom{A21} $((\larger\phi) \land \chi \land \psi \aentails{1} \false) \impl ((\larger\phi) \land \chi \aentails{1} \false) \lor ((\larger\phi) \land \psi \aentails{1} \false)$ and $((\smaller\phi) \land \chi \land \psi \aentails{1} \false) \impl ((\smaller\phi) \land \chi \aentails{1} \false) \lor ((\smaller\phi) \land \psi \aentails{1} \false)$, \\
where $\chi$ and $\psi$ are disjoint-sorted
\Axiom{A22} $(\epsilon \aentails{1} \alpha) \lor (\epsilon \aentails{1} \lnot\alpha)$, \\ where $\epsilon$ is a m.e.c.
\EndAxioms
Moreover, the only rule of \LAEPC{} is (MP).
\end{definition}

\begin{theorem} \label{thm:completeness-LAEPC}
Let $\mathcal T$ be a theory and $\Phi$ be a formula of \LAEPC. Then ${\mathcal T} \provesLAEPC \Phi$ if and only if ${\mathcal T} \modelsLAEPC \Phi$.
\end{theorem}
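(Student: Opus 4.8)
The plan is to mirror the completeness proof of \LAEC{} (Theorem \ref{thm:completeness-LAEC}), treating each sort as a copy of the \LAEC{} situation and then gluing the sorts together by means of the genuinely new axioms (A20), (A21) and (A22). Soundness (the ``only if'' direction) is routine: the soundness of (A1')--(A19) is inherited from Theorems \ref{thm:completeness-LAE} and \ref{thm:completeness-LAEC} together with Lemmas \ref{lem:diamond-on-similarity-space-multidimensional} and \ref{lem:intersection-of-orthotopes} (here one checks that the restricted versions of (A15), (A16), (A18) are sound because, on a cylinder over a single sort, the product modal operators reduce to the component operators). The soundness of (A20) is exactly Lemma \ref{lem:diamond-on-similarity-space-multidimensional}(v), that of (A21) is Lemma \ref{lem:diamond-on-similarity-space-multidimensional}(vi), and that of (A22) follows from condition (ii) in Definition \ref{def:semantics-LAEPC}, since an m.e.c.\ fixes the truth value of every sorted variable and hence of every unsorted one.

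For the ``if'' direction I would assume ${\mathcal T} \doesnotprove \Phi$, extend $\mathcal T$ to a complete theory, and form the Boolean algebra $\mathcal L$ of $\equallystrong$-classes carrying the induced operators $\smaller, \larger$ exactly as before. The new ingredient is the recovery of the product structure. For each sort $i$ I would let $W_i$ consist of the classes $\equallystrongcl{\epsilon_i}$ of the consistent one-sorted m.e.c.'s $\epsilon_i$ of sort $i$, define $S_i$ by (\ref{fml:derived-similarity}) and $\worldleq_i$ through $\smaller\epsilon_i \stronger \smaller\delta_i$. Since on one-sorted expressions the product operators $\smaller,\larger$ collapse to the component ones and the axioms available to sort $i$ are precisely the restricted (A15), (A16), (A18) together with (A12)--(A14), (A17), the argument of Theorem \ref{thm:completeness-LAEC} applies within each sort and shows that every $(W_i, S_i, \worldleq_i)$ is a totally ordered similarity space. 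It then remains to identify the set $W$ of atoms of $\mathcal L$, i.e.\ the consistent full m.e.c.'s, with $\prod_i W_i$. The map $\equallystrongcl{\epsilon} \mapsto (\equallystrongcl{\epsilon_1}, \ldots, \equallystrongcl{\epsilon_M})$ sending a consistent m.e.c.\ to the tuple of its one-sorted parts is well defined and injective; its surjectivity is the assertion that a conjunction of consistent one-sorted m.e.c.'s of distinct sorts is again consistent. This is where (A21) enters: taking $\larger\phi \equallystrong \true$, its first half yields $(\chi \land \psi \aentails{1} \false) \impl (\chi \aentails{1} \false) \lor (\psi \aentails{1} \false)$ for disjoint-sorted $\chi, \psi$, and an induction over the sorts gives exactly the required product consistency.

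With $W = \prod_i W_i$ established, I would define $S$ on $W$ by (\ref{fml:derived-similarity}) and show $S = \min_i S_i$ as in (\ref{fml:total-similarity}). The crucial step is that, for consistent m.e.c.'s $\epsilon = \epsilon_1 \land \cdots \land \epsilon_M$ and $\delta = \delta_1 \land \cdots \land \delta_M$, one has ${\mathcal T} \proves \epsilon \aentails{c} \delta$ iff ${\mathcal T} \proves \epsilon_i \aentails{c} \delta_i$ for every $i$; this is obtained by iterating (A20) (whose consistency hypothesis is supplied by the product consistency just shown), and combined with (A3) it gives $S(\equallystrongcl{\epsilon}, \equallystrongcl{\delta}) = \min_i S_i(\equallystrongcl{\epsilon_i}, \equallystrongcl{\delta_i})$. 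Defining the preorders $\below_i$ on $W$ by (\ref{fml:preorders}) then makes $(W, S, (\below_i)_i)$ a component-wise ordered similarity space. Finally I would set $e$ by (\ref{fml:evaluation}) and verify the conditions of Definition \ref{def:semantics-LAEPC}: condition (i) holds because whether $\epsilon \stronger \phi$ for a sort-$i$ variable $\phi$ depends only on $\epsilon_i$ (using consistency to rule out $\epsilon_i \stronger \lnot\phi$); condition (ii) follows from (A22), which makes $e(\alpha)$ a union of atoms and hence a union of intersections of the sets $e(\phi)$ and their complements; and condition (iii), the preservation of $\smaller$ and $\larger$, is checked coordinate-wise via the explicit formula (\ref{fml:smaller-2}) using the one-sorted preservation already obtained. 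Once $e$ is an evaluation, the equivalence (\ref{fml:LAE-proof}) holds for the finite similarity space $(W,S)$ exactly as in the proof of Theorem \ref{thm:completeness-LAE}, so $e$ satisfies $\mathcal T$ but not $\Phi$.

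I expect the main obstacle to be the faithful reconstruction of the product: not the individual sort-wise application of the \LAEC{} argument, but the two gluing facts -- that consistency is preserved under disjoint-sorted conjunction (forcing the use of (A21) with $\larger\phi \equallystrong \true$) and that the canonical similarity decomposes as a minimum (forcing the iteration of (A20)). A secondary delicate point is the modal preservation (iii) on expressions mixing several sorts, where one must pass through (\ref{fml:smaller-2}) and argue that the largest sort-$i$ coordinate occurring in $e(\phi)$ is correctly captured syntactically by $\smaller$; the disjunction rule of Lemma \ref{lem:LAEC}, applied sort-wise, is the tool I would use there.
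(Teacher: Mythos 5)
Your proposal follows essentially the same route as the paper's proof: soundness via Lemmas \ref{lem:diamond-on-similarity-space-multidimensional} and \ref{lem:intersection-of-orthotopes}, then a Lindenbaum-algebra construction that builds each $(W_i, S_i, \worldleq_i)$ by the \LAEC{} argument restricted to sort $i$, identifies the atoms with $\prod_i W_i$ via (A21), recovers $S = \min_i S_i$ via (A20), handles unsorted variables via (A22), and verifies modal preservation through the one-sorted decomposition. The one step you flag as delicate but leave open --- showing $\smaller\equallystrongcl{\phi}$ equals the meet of the $\smaller\equallystrongcl{\epsilon}$ over one-sorted m.e.c.'s $\epsilon$ with $\phi \stronger \smaller\epsilon$ --- is exactly where the paper combines (A21) with (A18) (passing from $\epsilon_i \land \smaller\phi \equallystrong \false$ to $\larger\epsilon_i \land \smaller\phi \equallystrong \false$ and then to the predecessor of $\epsilon_i$), so your toolkit is the right one and the argument goes through as you anticipate.
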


\begin{proof}
We check again the ``only if'' part first. The soundness of (A1)--(A10) follows from the soundness part of Theorem \ref{thm:completeness-LAE}. The soundness of (A15)--(A16) follows from part (i) of Definition \ref{def:semantics-LAEPC} and the soundness part of Theorem \ref{thm:completeness-LAEC}. Moreover, the soundness of (A22) follows from part (ii) of Definition \ref{def:semantics-LAEPC}.

The soundness of (A12), (A13), (A14), (A17), and (A18) holds by parts (i), (ii), (iii), (iv), and (vii) of Lemma \ref{lem:diamond-on-similarity-space-multidimensional}, respectively. (A19) is sound by Lemma \ref{lem:intersection-of-orthotopes}. Finally, (A20) and (A21) are sound by part (v) and (vi) of Lemma \ref{lem:diamond-on-similarity-space-multidimensional}, respectively.

To see the ``if'' part, assume that $\mathcal T$ does not prove $\Phi$. Extending $\mathcal T$ if necessary, we can again assume that $\mathcal T$ is complete.

On the set of basic expressions $\mathcal B$, we define the relation $\stronger$, its symmetrisation $\equallystrong$, and the Boolean algebra $\mathcal L$ of $\equallystrong$-classes like in the proof of Theorem \ref{thm:completeness-LAE}. By (A17), also $\smaller$ and $\larger$ are compatible with $\equallystrong$ and hence induce unary operations on $\mathcal L$. Note furthermore that by (A22), each unsorted variable $\alpha$ is equivalent to a basic expression in which only sorted variables occur.

Let us consider a sort $1 \leq i \leq M$. Let ${\mathcal B}_i \subseteq {\mathcal B}$ consist of all basic expressions of the sort $i$. Moreover, let ${\mathcal L}_i$ be the Boolean subalgebra of $\mathcal L$ generated by the $\equallystrong$-classes of elements of ${\mathcal B}_i$. Note that all axioms of \LAEC{} apply to ${\mathcal B}_i$. Hence we can proceed like in the proof of Theorem \ref{thm:completeness-LAEC} to construct a totally ordered similarity space $(W_i, S_i, \worldleq_i)$ such that $W_i = \{ \equallystrongcl{\epsilon} \in {\mathcal L}_i \colon \text{$\epsilon$ is a m.e.c.\ of sort $i$ such that ${\mathcal T} \doesnotprove \epsilon \aentails{1} \false$} \}$ and, for any $\phi \in {\mathcal B}_i$, $\smaller \phi$ is the supremum of all m.e.c.s $\epsilon$ of sort $i$ such that $\equallystrongcl{\epsilon} \worldleq_i \equallystrongcl{\delta}$ for some m.e.c.\ $\delta \stronger \phi$ of sort $i$, and similarly for $\larger$.

Let now $W$ be the set consisting of the $\equallystrongcl{\epsilon}$'s, where $\epsilon$ is a m.e.c.\ such that ${\mathcal T} \doesnotprove \epsilon \aentails{1} \false$. Let $\equallystrongcl{\epsilon_i} \in W_i$ for $i = 1, \ldots, M$; then ${\mathcal T} \doesnotprove \epsilon_i \aentails{1} \false$ for any $i$ and it follows from (A21) that ${\mathcal T} \doesnotprove \bigwedge_i \epsilon_i \aentails{1} \false$ and consequently $\equallystrongcl{\bigwedge_i \epsilon_i} \in W$. We conclude that $W$ can be identified with the direct product of the $W_i$'s. Under this identification, we extend $\worldleq_i$ to a preorder $\leq_i$ on $W$ according to (\ref{fml:preorders}).

For $\equallystrongcl{\delta}, \equallystrongcl{\epsilon} \in W$, we define $S(\equallystrongcl{\delta}, \equallystrongcl{\epsilon})$ by (\ref{fml:derived-similarity}). By (A20), we conclude that $S$ depends on the $S_i$ according to (\ref{fml:total-similarity}). Hence $(W, S, (\below_i)_{i = 1, \ldots, M})$ is a component-wise ordered similarity space.

By (A14), we have $\smaller\equallystrongcl{\false} = \larger\equallystrongcl{\false} = \equallystrongcl{\false}$. Our next aim is to show that, for any $\equallystrongcl{\phi} \in {\mathcal L} \backslash \{\equallystrongcl{\false}\}$,
\begin{equation} \label{fml:LAEPC-proof}
\begin{split}
\smaller\equallystrongcl{\phi} \;=\; \bigwedge \{ \smaller\equallystrongcl{\epsilon} \colon
   \text{$\epsilon$ is a one-sorted m.e.c.\ such that
   $\phi \stronger \smaller\epsilon$} \}, \\
\larger\equallystrongcl{\phi} \;=\; \bigwedge \{ \larger\equallystrongcl{\epsilon} \colon
   \text{$\epsilon$ is a one-sorted m.e.c.\ such that
   $\phi \stronger \larger\epsilon$} \}.
\end{split}
\end{equation}
It will then follow that $\smaller$ and $\larger$ are defined on $\mathcal L$ in accordance with (\ref{fml:smaller-1}).

We restrict to the first part of (\ref{fml:LAEPC-proof}); the second part is shown analogously. The ``$\stronger$'' relation follows from (A17a), (A13a), and (A19). To see the ``$\lessstrong$'' relation, let $\delta$ be a m.e.c.\ such that $\delta \land \smaller\phi \equallystrong \false$. Let $\delta \equallystrong \epsilon_1 \land \ldots \land \epsilon_M$, where $\epsilon_i$ is, for each $i$, a one-sorted m.e.c.\ belonging to the sort $i$. By (A21), there is an $i$ such that $\epsilon_i \land \smaller\phi \equallystrong \false$. By (A18), it further follows $\larger\epsilon_i \land \smaller\phi \equallystrong \false$. Hence $\phi \stronger \smaller\phi \stronger \lnot\larger\epsilon_i \equallystrong \smaller\epsilon_i'$, where $\epsilon_i'$ is the predecessor of $\epsilon_i$ w.r.t.\ $\worldleq_i$. Moreover, $\delta \land \smaller\epsilon_i' \stronger \epsilon_i \land \smaller\epsilon_i' \equallystrong \false$, and the claim follows.

In particular, $W$ is the set of atoms of $\mathcal L$. We conclude that the mapping $e \colon {\mathcal B} \to {\mathcal P}(W)$ defined by (\ref{fml:evaluation}), that is,
\[ e(\phi) \;=\;
   \{ \equallystrongcl{\epsilon} \in W \colon \mathcal{T} \proves \epsilon \aentails{1} \phi \}, \]
is an evaluation in $(W, S, (\below_i)_{i = 1, \ldots, M})$. We argue as in the proof of Theorem \ref{thm:completeness-LAE} to see that (\ref{fml:LAE-proof}) holds. Hence the evaluation $e$ satisfies all elements of $\mathcal T$ but not $\Phi$.
\end{proof}

The system \LAEPC{} is thus an advanced variant of the original logic \LAEC{} that, in a scenario with different sorts (or attributes) and a global similarity relation built from individual ones on each sort, is able to cope with a restricted conjunctive inference pattern.  

At this point it is interesting to go back to the example introduced in Section \ref{sec:introduction}. Remember we had three propositions $\alpha$, $\beta$, $\gamma$ denoting the following properties of a car:

\begin{tabular}{ll}
$\alpha$ & ``power(car) $=$ 110 CV'' \\
$\beta$  & ``price(car) $\geq$  20 000 \euro'' \\
$\gamma$ & ``consumption(car) $\geq$ 6 L/100km''
\end{tabular}

\noindent 
and that our domain knowledge was modelled by a theory $\mathcal T$ containing the graded implications $\alpha  \aentails{c} \beta,  \alpha  \aentails{d} \gamma$. Requiring in addition that neither $\beta$ nor $\gamma$ is contradictory, let our theory be such that
\[ \mathcal T \supseteq \{\alpha  \aentails{c} \beta, \;\; \alpha  \aentails{d} \gamma, \;\; \lnot(\beta \aentails{1} \false), \;\; \lnot(\gamma \aentails{1} \false) \}.\]

\noindent
Further assume, as suggested already in Section \ref{sec:introduction}, that $\beta$ and $\gamma$ belong to different sorts (power and price, respectively) and that $\alpha$ is unsorted. Moreover, as the intended semantics of $\beta$ and $\gamma$ is to denote upwards closed intervals in the range of prices and consumption, we can assume that they are of the form $\larger \beta'$ and $\larger \gamma'$, respectively. By (A21), \LAEPC{} proves from $\mathcal T$ that $\lnot(\beta \land \gamma \aentails{1} \false)$ and hence we can apply (A19). Thus \LAEPC{} allows us to approximately conclude from $\mathcal T$ to the degree $\min(c, d)$ that if the power of a car is 110 CV then its price is above 20 000 \euro\ {\em and} its consumption will be at least 6 litres per 100 km, that is,
\[ \mathcal T \provesLAEPC \alpha  \aentails{\min(c,d)} \beta \land \gamma.\]

\section{Conclusions}
\label{sec:conclusion}

In this paper we have been concerned with extending the logic \LAE{}, a logic for reasoning about graded similarity-based approximate conditionals $\varphi \aentails{c} \psi$, to allow for a conjunctive closure of the conclusions of these conditionals, a feature that is lacking in its original formulation. The semantics of these logical systems is based on Kripke-like structures, that we have called similarity spaces, consisting of a set of worlds equipped with a fuzzy similarity relation. For our purposes, we have considered  two particular classes of these structures. In a first step, we have considered similarity spaces where the set of worlds is endowed with a total order and the similarity relation is compatible with it. Under these assumptions we have shown that, in the resulting logic \LAEC, one can derive the conditional $\alpha  \aentails{c} \beta \land \gamma$ from $\alpha  \aentails{c} \beta$ and $\alpha  \aentails{c} \gamma$, as soon as $\beta$ and $\gamma$ are non-contradictory propositions interpreted as intervals in the chain of worlds. The possibility to refer to intervals can be achieved by means of introducing two modal operators $\Diamond^{\leq}$ and $\Diamond^{\geq}$ into the basic language, whose semantics is given by the total order  in our enriched similarity spaces taken as an accessibility relation. Then, in a second step, we have generalised this approach to axiomatise a many-sorted logic \LAEPC{} whose corresponding classes of fuzzy similarity structures are Cartesian products of totally ordered similarity spaces, each one for a different sort of the language. In this case , the operators $\Diamond^{\leq}$ and $\Diamond^{\geq}$ are able to capture properties of Cartesian products of intervals.  

A number of open issues remain to be addressed in future developments. For instance, even if the logic \LAEPC{} already has a much better expressive power than the original \LAEC{}, it would certainly be desirable to have a logic without the technical constraints that we introduced to validate the conjunctive combination of conclusions in graded implications.  Another interesting extension to study is to allow for a more general language, already starting with \LAE{}, where the $\aentails{c}$ operators can be nested. Indeed, one could express the operator  $\aentails{c}$ in terms of a graded possibility $KTB$-modality $\Diamond_c$ (like in \cite{EGGR}) together with a global S5 necessity modality $\Box$, namely to express $\varphi \aentails{c} \psi$ as $\Box(\varphi \to \Diamond_c \psi)$.\footnote{We are grateful to an anonymous reviewer for pointing out this question.} Yet another  alternative approach to explore is to introduce a notion of context into these graded implications, where contexts basically encode subsets of possible worlds that enforce the validity of the graded implications that they are qualifying. This approach was already considered in \cite{DPEGG} in the setting of graded consequence relations. A further important question to be addressed is the complexity of these logics and efficient proof methods for them; cf.\ \cite{AlOl,AOP}.

\subsection*{Acknowledgements.} The authors sincerely thank the anonymous reviewers for their useful and constructive comments that have significantly helped to improve the paper. Vetterlein acknowledges the support of the Austrian Science Fund (FWF): project I 1923-N25 (New perspectives on residuated posets). Esteva and Godo acknowledge the support of the Spanish MINECO project EdeTRI TIN2012-39348-C02-01 and of the Catalan Government grant 2014 SGR 118.

\end{document}